\newtheorem{thm}{Theorem}[section]
\newtheorem{prop}[thm]{Proposition}
\newtheorem{cor}[thm]{Corollary}
\theoremstyle{definition}
\newtheorem{rem}[thm]{Remark}
\newtheorem{defn}[thm]{Definition}
\def\ie{\emph{i.e.}}
\def\eg{\emph{e.g.}}
\newcommand{\sab}{{\sf smod}_R}
\newcommand{\chain}{{\sf Ch}_{\geq 0}}
\newcommand{\aw}{{\sf aw}}
\newcommand{\sh}{{\sf sh}}
\newcommand{\Sh}{{\mathrm{Sh}}}
\newcommand{\hot}{\hat{\otimes}}
\newcommand{\hottau}{\hat{\tau}}
\newcommand{\ttau}{\tilde{\tau}}
\newcommand{\uR}{\underline{R}}
\newcommand{\ttensor}{\tilde{\otimes}}
\newcommand{\tcom}{\mathrm{Com}_{\ttensor}}
\def\C{\mathcal{C}}
\def\D{\mathcal{D}}
\def\B{\mathcal{B}}
\def\BP{\mathit{BP}}
\def\Z{\mathbb{Z}}
\def\F{\mathbb{F}}
\def\lra{\longrightarrow}
\def\id{\mathrm{id}}
\def\leq{\leqslant}
\def\geq{\geqslant}
\def\ra{\rightarrow}
\def\bp1{\BP\langle 1\rangle}
\begin{document}
\title[Divided power structures and chain complexes]{Divided power
  structures and chain complexes}
\author{Birgit Richter}
\address{Department Mathematik der Universit\"at Hamburg,
Bundesstra{\ss}e 55, 20146 Hamburg, Germany}
\email{richter@math.uni-hamburg.de}
\keywords{Divided power structures, Dold-Kan correspondence, 
simplicial commutative rings}
\subjclass[2000]{Primary 18G30, 18G35; Secondary 18D50 
}
\thanks{
I thank Benoit Fresse, Haynes Miller and Teimuraz Pirashvili for help
with some of the references. \today}
\begin{abstract}
We interpret divided power structures on the homotopy groups of
simplicial commutative rings as having a counterpart in
divided  power structures on chain complexes coming from a
non-standard symmetric monoidal structure. 
\end{abstract}
\maketitle

\section{Introduction}
Every commutative simplicial algebra has a divided power structure on
its homotopy groups. The Dold-Kan correspondence compares simplicial
modules to non-negatively graded chain complexes. It is an equivalence
of categories, but its multiplicative properties do not interact well
with commutativity: commutative simplicial algebras are sent to
homotopy commutative differential graded algebras, but in general not
to differential graded algebras that are commutative on the nose. 
The aim of this note is to gain a better understanding
when suitable multiplicative structures on a chain complex actually do
give rise to divided power structures. To this end, we use the equivalence of
categories between simplicial modules and non-negatively graded chain
complexes and transfer the tensor product of simplicial modules to a
symmetric monoidal category structure on the category of chain
complexes. 

We start with a brief overview on divided power algebras in section
\ref{sec:pdalgebras}. We prove a general transfer result for symmetric
monoidal category structures in section
\ref{sec:equivalence}. That such a transfer of monoidal structures is
possible is a folklore result and constructions like ours are used in
other contexts, see for instance \cite[p.263]{Sch} and \cite{Q}. 
We consider the case of chain complexes in section \ref{sec:divpowers}
where  we use this monoidal structure to gain our main results: 
in Corollary \ref{cor:divhomology} we give a criterion when a chain
complex  has a divided power structure on its
homology groups and in Theorem \ref{thm:commttensor} we describe when
we can actually gain a divided power structure on the differential graded 
commutative algebra that interacts nicely with the differential. 

As an example we give an alternative description
of the well-known (\cite{C,BK}) divided power structure on Hochschild
homology: instead of working with the bar construction in the
differential graded setting, we consider the simplicial bar
construction of a commutative algebra. This is naturally a simplicial
commutative algebra and hence the reduced differential graded bar
construction  inherits a divided power chain algebra structure from
its simplicial relative. 

\section{Divided power algebras} \label{sec:pdalgebras}
Let $R$ be a commutative ring with unit and let  $A_*$ be an 
$\mathbb{N}_0$-graded commutative algebra with $A_0=R$. We denote the
positive part of $A_*$, $\bigoplus_{i>0} A_i$, by $A_{*>0}$.
\begin{defn}
\label{def:gradeddivpowers}
A \emph{system of divided powers in $A_*$} consists of a collection of
functions $\gamma_n$ for $n \geq 0$ that are defined on $A_i$ for $i
> 0$ such that the following conditions
are satisfied.   
\begin{enumerate}
\item
$\gamma_0(a) = 1$  and $\gamma_1(a) = a$ for all $a \in A_{*>0}$. 
\item 
The degree of $\gamma_i(a)$ is $i$ times the degree of $a$. 
\item
$\gamma_i(a) = 0$ if the degree of $a$ is odd and $i>1$. 
\item
$\gamma_i(\lambda a) = \lambda^i \gamma_i(a)$ for all $a \in A_{*>0}$
and $\lambda \in R$. 
\item
For all $a \in A_{*>0}$
$$ \gamma_i(a) \gamma_j(a) = \dbinom{i+j}{i} \gamma_{i+j}(a).$$
\item
For all $a, b \in  A_{*>0}$
$$ \gamma_i(a+b) = \sum_{k+\ell = i} \gamma_k(a)\gamma_\ell(b).$$
\item
For all $a, b \in  A_{*>0}$ 
$$ \gamma_i(ab) = i! \gamma_i(a) \gamma_i(b) = a^i \gamma_i(b) =
\gamma_i(a) b^i.$$
\item
For all $a\in  A_{*>0}$
$$ \gamma_i(\gamma_j(a)) = \frac{(ij)!}{i!(j!)^i} \gamma_{ij}.$$
\end{enumerate}
\end{defn}
If we want to specify a fixed system of divided powers $(\gamma_i)_{i
  \geq 0}$ on $A_*$, we use the notation $(A_*, \gamma)$.  
For basics about systems of divided powers see \cite[Expos\'e 7,
8]{C}, \cite[section 7]{GL}, \cite[Appendix 2]{E}, \cite[Chapitre
I]{B} and \cite{Ro}. 

\subsubsection*{Some  properties}

Condition (e) implies that for all $i > 1$ the $i$-fold power of an
element $a \in A_{*>0}$ is related to its $i$-th divided power via 
\begin{equation}\label{eq:powersdivp}
a^i = i! \gamma_i(a). 
\end{equation}
Therefore, if the underlying $R$-modules $A_i$ are torsion-free, then there
is at most one system of divided powers on $A_*$, and if $R$ is a
field of  characteristic zero, then the assignment
$\gamma_i(a) =  a^i/i!$ defines a unique system of divided powers on every
$A_*$. 

If squares of odd degree elements are zero and if we are in a
torsion-free context, then condition (c) is of course taken care of by
condition (e). Some authors (\eg  \,\cite{C}) demand that the underlying
graded commutative algebra is \emph{strict}, \ie, that $a^2 = 0$
whenever $a$ has odd degree. 

Not every $\mathbb{N}_0$-graded commutative algebra possesses a system
of divided powers. Consider for instance the polynomial rings $\Z[x]$
over $\Z$ and $\F_2[x]$ over $\F_2$, where $x$ is a generator in
degree two. In $\F_2[x]$,
$x^2$ is not zero, but if there were a system of divided powers, the
equation $x^2 = 2\gamma_2(x)$ would force $x^2$ to vanish. Over the
integers, the existence of $\gamma_2(x)$ would imply that $x^2$ were
divisible by two and that's not the case. 

Note, that the following useful product formula
\begin{equation} \label{eq:prod} 
\frac{(ij)!}{i!(j!)^i} = 
\prod_{r=2}^{i} \dbinom{rj-1}{j-1}
\end{equation}
holds. 

We saw that over the rationals, divided powers can be expressed in
terms of the underlying multiplication of the $\mathbb{N}_0$-graded
commutative algebra. If the ground ring $R$ is a field of characteristic $p$
for some prime number $p \geq 2$, then for any system of divided powers on
$A_*$ the relation 
$$ a^p = p! \gamma_p(a)$$
forces the $p$-th powers of elements in $A_*$ to be trivial. There are
more relations implied by divided power structures, for instance any
iteration of the form $\gamma_i(\gamma_p(a))$ is equal to 
$\gamma_{ip} \frac{(ip)!}{i!(p!)^i}$, but using relation
\eqref{eq:prod} it is easy to see that the coefficient of
$\gamma_{ip}$ is congruent to one and hence
$$ \gamma_i(\gamma_p(a)) = \gamma_{ip}(a).$$
For a more thorough treatment of divided powers in prime
characteristic see \cite[Expos\'e 7, \S\S 7,8]{C}, \cite{A}, and  \cite{G}.

\subsubsection*{Divided power structure with respect to an ideal} 
The occurence of divided power structures is not limited to the graded
setting. In an $\mathbb{N}_0$-graded commutative $R$-algebra $A_*$ the positive
part is an ideal. In the context of ungraded commutative rings, divided power
structures can be defined relative to an ideal. The following
definition is taken from \cite[Chapitre I Definition 1.1.]{B}.
\begin{defn}
Let $A$ be a commutative ring and $I$ and ideal in $A$. A
\emph{divided power structure on $I$} consists of a family of maps 
$$ \gamma_i \colon I \ra A, i \geq 0$$ which satisfy the following
conditions 
\begin{enumerate}
\item 
For all $a \in I$, $\gamma_0(a) = 1$ and $\gamma_1(a) = a$. The image
of the $\gamma_i$ for $i\geq 2$ is contained in $I$. 
\item
For all elements $a \in A$ and $b \in I$, $\gamma_i(ab) =
a^i\gamma_i(b)$. 
\item
Conditions (e), (f) and (h) of Definition \ref{def:gradeddivpowers} apply in an
adapted sense. 
\end{enumerate}
\end{defn}
An important example of divided power structures on ungraded rings is
the case of discrete valuation rings of mixed characteristic. If $p$
is the characteristic of the residue field, $\pi$ is a uniformizer 
and $p = u \pi^e$ with $u$ a unit, then for the existence of a divided
power structure on the discrete valuation ring is is necessary and
sufficient that the ramification index $e$ is less than or equal to
$p-1$ (see \cite[Chapitre I, Proposition 1.2.2]{B}). 

\subsubsection*{Morphisms and free objects}

 Morphisms are straightforward to define:
\begin{defn}
Let $(A_*, \gamma)$ and $(B_*, \gamma')$ be two $\mathbb{N}_0$-graded
commutative algebras with systems of divided powers. A
morphism of  $\mathbb{N}_0$-graded commutative algebras $f\colon A_*
\ra B_*$ is a \emph{morphism of divided power structures}, if 
$$ f(\gamma_i(a)) = \gamma'_i(f(a)), \quad \text{ for all } \quad i
\geq 2.$$
\end{defn}
An analogous definition works in the ungraded case. 

We will describe the free divided power algebra
generated by  a $\mathbb{N}_0$-graded module $M_*$ whose components
$M_i$ are free $R$-modules. 

\begin{defn}
Consider the free $R$-module generated by an element $x$ of degree
$m$. 
\begin{itemize}
\item 
If $m$ is odd, then \emph{the free divided power algebra on $x$} over
$R$ is the
exterior algebra over $R$ generated by $x$, $\Lambda_R(x)$. In this
case the $\gamma_i$ are trivial for $i \geq 2$. 
\item
If $m$ is even, \emph{the free divided power algebra on $x$ over $R$} is
$$ R[X_1,X_2,\ldots]/I.$$
Here the $X_n$ are polynomial generators in degree $nm$ and $I$ is the
ideal generated by 
$$ X_i X_j - \dbinom{i+j}{i} X_{i+j}.$$
\end{itemize}
As the tensor product over $R$ is the coproduct in the category of
$\mathbb{N}_0$-graded commutative $R$-algebras, we get a notion of a free
divided power algebra on a finitely generated module $M_*$ whose $M_i$
are free as $R$-modules by taking care of Condition (g) of Definition
\ref{def:gradeddivpowers}. If $M_*$ is not finitely generated we take
the colimit of the free divided power algebras on finitely many
generators. Compare \cite[Proposition 1.7.6]{GL}. 

If $M_*$ is an  $\mathbb{N}_0$-graded module that is freely generated
by  elements $x_1,\ldots,
x_n$, then it is common to denote the free divided power algebra over
$R$ on these generators by $\Gamma_R(x_1, \ldots, x_n)$. 
\end{defn}

Occurences of free divided power algebras are ample. For instance,
the cohomology ring of the loop space on a sphere,
$\Omega(\mathbb{S}^n)$, for $n \geq 2$ is a free divided power
algebra. Using the Serre spectral sequence for the path-loop
fibration $\Omega \mathbb{S}^n \ra P\mathbb{S}^n \ra \mathbb{S}^n$ one
gets 
$$ H^*(\Omega(\mathbb{S}^n); \Z) \cong \left\{
\begin{array}{ll}
\Gamma_{\Z}(a) & |a| = n-1, n \text{ odd }\\
\Gamma_{\Z}(a,b)  \cong \Lambda_{\Z}(a) \otimes \Gamma_{\Z}(b) & |a| =
n-1, |b| = 2n-2, n \text{ even }
\end{array}
\right. $$

Often, free divided power algebras arise as duals of symmetric
algebras. For a $\mathbb{N}_0$-graded $R$-module $M_*$,  its $R$-dual,
$M^*$, is the $\mathbb{N}_0$-graded $R$-module with $M^i = \mathrm{Hom}_R(M_i,
R)$. The symmetric algebra generated by a $\mathbb{N}_0$-graded
$R$-module $M_*$ is 
$$ S(M_*) = T_R(M_*)/C$$
where $C$ is the ideal generated by the graded commutativity relation 
$$ ab - (-1)^{|a||b|}ba.$$
Here, $|a|$ denotes the degree of $a$ and $T_R(M_*) = \bigoplus_{i\geq
  0} M_*^{\otimes i}$ is the tensor algebra generated by $M_*$. 

The diagonal map $M_* \ra M_* \oplus M_*$, $a \mapsto (a,a)$ induces a
cocommutative coalgebra structure 
$$\Delta \colon S(M_*) \lra S(M_* \oplus M_*) \cong S(M_*) \otimes
S(M_*). $$

On the graded dual of $S(M_*)$, $S(M_*)^*$, this comultiplication
yields a graded commutative multiplication
$$ m = \Delta^* \colon S(M_*)^* \otimes S(M_*)^* \lra S(M_*)^*. $$
If $M_*$ is a $\mathbb{N}_0$-graded $R$-module whose components are
free over  $R$, then $S(M_*)^*$ has a system of divided powers (see
for  instance \cite[A2.6]{E}). 

Let $\Sigma_n$ denote the symmetric group on $n$-letters. If $N_*$ is an
is an $\mathbb{N}_0$-graded module with $\Sigma_n$-action, then we
denote by  $N_*^{\Sigma_n}$ the invariants in $N_*$ with respect to the
$\Sigma_n$-action. 
If $M$ is a free $R$-module, then one can describe the free divided
power algebra on $M$ as
\begin{equation}
  \label{eq:invariants}
  \bigoplus_{n \geq 0} (M_*^{\otimes n})^{\Sigma_n}.
\end{equation}
This is a classical result and is for instance proved in
\cite[Expos\'e 8, Proposition 4]{C}. See Roby \cite[Remarque
p.~103]{Ro} for an example where the two notions differ if one
considers a module that is not free. Divided power structures can in
fact be described via \eqref{eq:invariants}: a graded module with free
components, $M_*$, with $M_0=0$ has a divided power structure if there is a map 
\begin{equation}
  \label{eq:monad}
 \bigoplus_{n \geq 1} (M_*^{\otimes n})^{\Sigma_n} \ra M_* 
\end{equation}
that satisfies the axioms of a monad action (see \cite{F}). The monad
structure that is applied in the desciption via \eqref{eq:monad} uses
the  invertibility of the norm map
on reduced symmetric sequences of the form $M_*^{\otimes n}$
\cite[1.1.16 and 1.1.18]{F}. The invertibility of the norm map in this
case was discovered earlier by  Stover \cite[9.10]{St}.

\subsubsection{Divided power structures in the simplicial context}

On the homotopy groups of  simplicial commutative rings there are
divided power operations and it is this instance of divided power
structures that we will investigate in this paper. 

In the context of the action of the Steenrod algebra on cohomology
groups of spaces, the top operation in the $p$-th power map. On the
homotopy groups of simplicial commutative $\F_2$-algebras, there are
analogous operations $\delta_i$ of degree $i\geq 2$ such that the highest
operation is the divided square. These operations were investigated by
Cartan \cite[Expos\'e no 8]{C} and were intensely studied by many
people (\cite[section 8]{Bo}, \cite{D}, \cite [chapter 2]{G}, 
\cite{T}). In \cite[8.8 onwards]{Bo} a family of operations for odd primes is
discussed as well.

\subsection*{Notation}
With $\Delta$ we denote the category whose
objects are the sets $[n] = \{0,\ldots,n\}$ with their natural
ordering and morphisms from in $\Delta$ are monotone maps. A
simplicial object in a category $\mathcal{C}$ is a functor from the
opposite category of $\Delta$, $\Delta^{op}$, to $\mathcal{C}$. We
denote the category of simplicial objects in $\mathcal{C}$ by
$s\mathcal{C}$. 

In the category of simplicial sets, the representable functors 
$$ \Delta(n)\colon \Delta^{op} \rightarrow \mathrm{Sets}$$
are the ones that send $[m] \in \Delta$ to $\Delta([m],[n])$. 
If $\delta_i\colon [n] \ra [n+1]$ denotes the map that is the
inclusion that misses $i$ and is strictly monotone everywhere else
and if $\sigma_i\colon [n] \ra [n-1]$ is the surjection that sends
$i$ and $i+1$ to $i$ and is strictly monotone elsewhere, then we
denote their opposite maps by $d_i = (\delta_i)^{op}$ and $s_i =
(\sigma_i)^{op}$.

If $S$ is a set and $R$ is a ring, then we denote the free $R$-module 
generated by $S$ by $R[S]$. The tensor product of two $R$-modules $N$
and $M$, $N \otimes_R M$, will be abbreviated by $N \otimes M$. 

\section{The Dold-Kan correspondence} \label{sec:doldkan}
The Dold-Kan correspondence  \cite[Theorem 1.9]{D1} 
compares the category of simplicial
objects in an abelian category $\mathcal{A}$ to the non-negatively
graded chain complexes over $\mathcal{A}$ via a specific equivalence
of categories. In the following we fix an arbitrary commutative ring
with unit $R$ and we  will focus on the correspondence
between simplicial $R$-modules , $\sab$, and non-negatively graded
chain complexes of $R$-modules, $\chain^R$.

The equivalence is given by the
\emph{normalization functor}, $N\colon \sab \ra \chain^R$, and we denote
its inverse by $\Gamma$
$$ \xymatrix{{N\colon \sab} \ar@<0.5ex>[r] &
{\chain^R \, \colon\!\Gamma}\ar@<0.5ex>[l].  } $$
In particular the functor $N$ is a left
adjoint to $\Gamma$. The value of
$N$ on a  simplicial $R$-module $X_\bullet$ in chain degree $n$ is 
$$ N_n(X_\bullet) = \bigcap_{i=1}^{n} \mathrm{ker}(d_i: X_n \lra X_{n-1}) $$
where the $d_i$ are the simplicial structure maps. The differential
$d: N_n(X_\bullet) \ra N_{n-1}(X_\bullet)$ is given by the remaining face map
$d_0$. 

% Note that $\Gamma$ can be described in a different manner, because $N$ and
% $\Gamma$ are adjoint, and hence we have the following chain of  
% identifications 
% $$\Gamma(C_*)_n \cong \sab(R[\Delta(n)], \Gamma(C_*)) \cong
% \chain^R(N(R[\Delta(n)]), C_*)$$ 
% which describes $\Gamma(C_*)_n$ as the $R$-module of chain maps
% from $N(R[\Delta(n)])$ to $C_*$. 

Recall that for a chain complex $C_*$, 
$$\Gamma_n(C_*) = \bigoplus_{p=0}^n \bigoplus_{\varrho\colon [n]
  \twoheadrightarrow [p]} C_p^\varrho $$
where $\varrho$ is an order preserving surjection and $C_p^\varrho =
C_p$. The normalized chain complex has an alternative description via a
quotient construction where one reduces modulo degenerate
elements \cite[lemma 8.3.7]{W}. 
There is a canonical identification
$\varphi_{C_*} \colon N\Gamma C_* \cong C_*$:  if $\varrho$ is 
not the identity map, then the simplicial structure of $\Gamma(C_*)$
identifies elements in $C_p^\varrho$ as being degenerate. For a
simplicial $R$-module $A_\bullet$ the isomorphism $\psi_{A_\bullet}
\colon \Gamma N(A_\bullet) \cong A_\bullet$ is induced by the map that
sends $N(A_\bullet)_p^\varrho \subset A_p$ via $\varrho$ to $A_n$.

The tensor product of chain complexes $(C_*,d)$ and $(C'_*,d')$ is defined as
usual via 
$$ (C_* \otimes C'_*)_n = \bigoplus_{p+q=n} C_p \otimes C'_q$$
with differential $D(c \otimes c') = (dc)\otimes c' + (-1)^p c \otimes
d'c'$ for $c \in C_p$, $c' \in C'_q$. 
Let $(R,0)$ denote the chain complex that has $R$ as
degree zero part and that is trivial in all other degrees. There is a twist
isomorphism $\tau_{C_*,C'_*}\colon C_* \otimes C'_* \ra C'_* \otimes
C_*$ that is induced by $\tau_{C_*,C'_*}(c\otimes c') = (-1)^{pq} c'
\otimes c$ for $c$ and $c'$ as above. The structure $(\chain^R, \otimes,
(R,0), \tau)$ turns $\chain^R$ into a symmetric
monoidal category. 

For two arbitrary simplicial $R$-modules $A_\bullet$ and
$B_\bullet$, let $A_\bullet \hot B_\bullet$ denote the degree-wise
tensor product  of $A_\bullet$ and
$B_\bullet$, \ie, $(A_\bullet \hot B_\bullet)_n = A_n \otimes_k B_n$.
Here, the simplicial structure
maps are applied in each component; in particular, the differential on
$N_*(A_\bullet \hot B_\bullet)$ in degree $n$ is $d_0 \otimes d_0$. The
constant simplicial object $\underline{R}$ which consists of
$R$ in every
degree is the unit with respect to $\hot$ and the twist 
$$ \hottau_{A_\bullet, B_\bullet} \colon A_\bullet \hot B_\bullet \lra 
B_\bullet \hot A_\bullet, \qquad  \hottau_{A_\bullet,
  B_\bullet}(a\otimes b) = b \otimes a$$ gives 
$(\sab, \hot, \uR, \hottau)$ the structure of a symmetric monoidal 
category. Note that $N(\uR) \cong (R,0)$. 

There are natural maps, the \emph{shuffle maps}, 
$$\sh\colon N(A_\bullet) \otimes
N(B_\bullet) \lra N(A_\bullet \hot B_\bullet)$$
(see \cite[VIII.8]{ML}) that turn the normalization into a lax
symmetric monoidal functor, \ie, the shuffle maps are associative in a
suitable sense and the diagram 
$$\xymatrix{
{N(A_\bullet) \otimes N(B_\bullet)} \ar[r]^{\sh} \ar[d]_{\tau} 
& {N(A_\bullet \hot B_\bullet)} \ar[d]^{N(\hottau)}\\
{N(B_\bullet) \otimes N(A_\bullet)} \ar[r]^{\sh} & {N(B_\bullet \hot A_\bullet)}
}$$
commutes for all $A_\bullet, B_\bullet \in  \sab$. 
However, the inverse of $N$, $\Gamma$, is
\emph{not} lax symmetric monoidal. In order to compare $\Gamma(C_*)
\hot \Gamma(C'_*)$ and $\Gamma(C_* \otimes C'_*)$ one uses the 
Alexander-Whitney map
$$\aw\colon N(A_\bullet \hot B_\bullet) \lra N(A_\bullet) \otimes
N(B_\bullet)$$
and this natural map is not symmetric, \ie, the diagram 
$$\xymatrix{
{N(A_\bullet \hot B_\bullet)}\ar[r]^(0.45){\aw} \ar[d]_{N(\ttau)} &  
{N(A_\bullet) \otimes N(B_\bullet)} \ar[d]^{\tau}\\
{N(B_\bullet \hot A_\bullet)}\ar[r]^(0.45){\aw} & {N(B_\bullet) \otimes
N(A_\bullet)}
}
$$
does \emph{not} commute. 

Schwede and Shipley proved, that the Dold-Kan correspondence passes to
a  Quillen equivalence between the category of associative simplicial
rings and the category  of differential graded associative
algebras that are concentrated in non-negative degrees. They consider
the  normalization functor
and construct an adjoint on the level of monoids which then gives rise
to a monoidal Quillen equivalence \cite{SchSh}.

If one starts with a differential graded commutative algebra, then
$\Gamma$ sends this algebra to a simplicial $E_\infty$-algebra
\cite[Theorem 4.1]{R}. In general, the Dold-Kan correspondence gives rise to a
Quillen adjunction between simplicial homotopy $\mathcal{O}$-algebras
and differential graded homotopy $\mathcal{O}$-algebras for operads
$\mathcal{O}$ in $R$-modules  \cite[Theorem 5.5.5]{R2}. 

\section{The divided power structure on homotopy groups of  
commutative simplicial algebras} 

In the following we view $\Sigma_{ni}$ as the group of
bijections of the set $\{0, \ldots, ni-1\}$. For a permutation
$\sigma$ we use $\varepsilon(\sigma)$ for its signum. We consider the set of
shuffle permutations, $$\mathrm{Sh}(\underbrace{n,\ldots,n}_{i}) \subset
\Sigma_{ni}.$$
This set consists of  permutations $\sigma \in \Sigma_{ni}$ such that 
$$ \sigma(0) < \ldots < \sigma(n-1), \ldots, \sigma((i-1)n)< \ldots <
\sigma(ni-1). $$
Let $\underline{j}$ denote the block of numbers $(j-1)n < \ldots <
jn-1$ for $1 \leq j \leq i$ and let $[ni-1] \backslash
\underline{j}$ denote the complement of $\underline{j}$ with its
inherited ordering from the one of $[ni-1]$. We use the abbreviation  
$s_{\sigma([ni] \backslash \underline{j})}$ 
for the composition of the degeneracy
maps $s_{\sigma(k)}$ where $k \in [ni-1] \backslash \underline{j}$ and
the order of the composition uses small indices first. For example,
let  $\sigma \in \mathrm{Sh}(2,2,2)$ be the permutation
$\sigma = (0,2)(1,4)(3,5)$
$$\xymatrix@R=0.5cm@C=0.5cm{
0 \ar@{-}[drr] & 1 \ar@{-}[drrr] & 2 \ar@{-}[dll] & 3 \ar@{-}[drr] & 4
\ar@{-}[dlll] & 5 \ar@{-}[dll]\\
0 & 1 & 2 & 3 & 4 & 5.
}$$
In this case, $s_{\sigma([5]
  \backslash \underline{2})} = s_4 \circ s_3 \circ s_2 \circ s_1.$

Let $A_\bullet$ be a commutative simplicial $R$-algebra, \ie, a commutative
monoid in $(\sab, \hot, \uR, \hottau)$. The homotopy groups of
$A_\bullet$, $\pi_*(A_\bullet)$, are the homology groups of the normalization of
$A_\bullet$, $H_*(N(A_\bullet))$. Starting with a cycle 
$a \in N_n(A_\bullet)$ we can map 
$a$ it to its $i$-fold tensor power
$$  N_n(A_\bullet) \ni a \mapsto a^{\otimes i} \in
N_n(A_\bullet)^{\otimes i}.$$
The $i$-fold iterated shuffle map sends $a^{\otimes i}$ to 
$$ \sum_{\sigma \in \mathrm{Sh}(n,\ldots,n)} 
\varepsilon(\sigma) s_{\sigma([ni-1] \backslash \underline{1})}(a)
\otimes \ldots \otimes s_{\sigma([ni-1] \backslash \underline{n})}(a)$$
so that the outcome is an element in 
$$A_{ni} \otimes \ldots \otimes A_{ni} = (A_\bullet \hot \ldots \hot
A_\bullet)_{ni}.$$
As none of the degeneracy maps arises $n$ times, we consider the image 
as an element of  $N_{ni}(A_\bullet \hot \ldots \hot
A_\bullet)$. If we compose the $i$-fold diagonal map with the
$i$-fold iterated shuffle map followed by the commutative
multiplication in $A_\bullet$, we can view the composite as a map 
$$ P_i\colon  N_n(A_\bullet) \lra N_{ni}(A_\bullet). $$ 
A tedious calculation shows that $P_i$ is actually a chain map. 
On the level of homology, this composite sends a homology class to its 
$i$-fold power.

The group $\Sigma_i$ acts on the set of shuffles
$\mathrm{Sh}(\underbrace{n,\ldots,n}_{i})$ by permuting the
$i$ blocks of size $n$. If $\xi \in \Sigma_i$ we denote the corresponding block
permutation by $\xi^b$. For a 
$\sigma \in \mathrm{Sh}(\underbrace{n,\ldots,n}_{i})$ and $\xi \in
\Sigma_i$, $\sigma \circ \xi^b$ is again an element of 
$\mathrm{Sh}(\underbrace{n,\ldots,n}_{i})$. 
As $A_\bullet$ is commutative, we have that 
the multiplication applied to a summand 
$s_{\sigma([ni-1] \backslash \underline{1})}(a)
\otimes \ldots \otimes s_{\sigma([ni-1] \backslash \underline{n})}(a)$
gives the same output as the multiplication applied to the summand
corresponding to $\sigma \circ \xi^b$. 

If the characteristic of the ground ring is not two, then 
$i$-fold powers for $i \geq 2$ are trivial unless $n$ is
even. The signum of the permutation $\sigma \circ \xi^b$ is the signum of
$\xi^b$ multiplied by $\varepsilon(\sigma)$. For each crossing in 
$\xi$ the block permutation $\xi^b$ has $n^2$ crossings, so 
$\xi^b$ is in the alternating group in this case. If the
characteristic of $R$ is $2$, then signs do not matter.

\begin{defn} \label{def:divsimpl}
For a simplicial commutative $R$-algebra $A_\bullet$, the \emph{$i$-th divided
power}  of $[a] \in \pi_n(A_\bullet) = H_n(N(A_\bullet))$ is defined
as the class of 
$$ \mu \circ \sum_{\sigma \in \mathrm{Sh}(n,\ldots,n)/\Sigma_i} 
\varepsilon(\sigma) s_{\sigma([ni] \backslash \underline{1})}(a)
\otimes \ldots \otimes s_{\sigma([ni] \backslash \underline{n})}(a)$$
where we choose a system of representing elements $\sigma \in
\mathrm{Sh}(n,\ldots,n)/\Sigma_i$. We denote the $i$-th divided power
of $a \in \pi_n(A_\bullet)$ by $\gamma_i(a)$. 
\end{defn}

As we know that all elements $\sigma'$ in the same coset as $\sigma$
give rise to the same value under the map $P_i$, we obtain that 
$$ a^i = i! \gamma_i(a).$$ 
With the conventions $\gamma_0(a) = 1$ and
$\gamma_1(a) = a$ we obtain the following (see for instance \cite[\S 2.2]{F}
for a proof). 
\begin{prop}
The system of divided powers in the homotopy groups of a commutative simplicial
$R$-algebra $A_\bullet$, $(\pi_*(A_\bullet), \gamma)$ satisfies the
properties from Definition
\ref{def:gradeddivpowers}. 
\end{prop}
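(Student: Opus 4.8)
The plan is to check the eight conditions of Definition~\ref{def:gradeddivpowers} in turn, each time reducing the homology-level assertion to a combinatorial identity among (iterated) shuffle permutations together with the strict commutativity of the multiplication on $A_\bullet$. The common thread is the relation $a^i = i!\,\gamma_i(a)$ established just above, which records that each $\Sigma_i$-coset in $\mathrm{Sh}(n,\ldots,n)$ contributes a single summand to $\gamma_i$ and $i!$ equal summands to the $i$-fold power. First I would record well-definedness: $P_i$ is already a chain map, and for $n$ even all representatives of a $\Sigma_i$-coset yield the same signed chain (the attached block permutation $\xi^b$ lies in the alternating group), so $\gamma_i$ is independent of the chosen representatives and descends to $\pi_*$. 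Conditions (a) and (b) are then immediate from Definition~\ref{def:divsimpl} and from the fact that the construction raises chain degree from $n$ to $ni$; condition (d) follows because the construction factors through $a \mapsto a^{\otimes i}$, so a scalar $\lambda$ pulls out of each of the $i$ tensor slots; and condition (c) is precisely the sign computation carried out before Definition~\ref{def:divsimpl}, where for $n$ odd the signum of $\xi^b$ equals $\varepsilon(\xi)$, so the signs cancel across a $\Sigma_i$-orbit for $i \geq 2$ in characteristic different from two and force the class to vanish.

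The additive axioms (f) and (e) I would derive from the behaviour of the shuffle maps under splitting blocks. For (f) I would expand $(a+b)^{\otimes i}$; since the iterated shuffle map is additive in each tensor slot, sorting the resulting terms by how many of the $i$ factors equal $a$ splits the sum over $\mathrm{Sh}(n,\ldots,n)$ into products indexed by shuffles of $k$ blocks and of $\ell = i-k$ blocks, which reassembles exactly into $\sum_{k+\ell=i}\gamma_k(a)\gamma_\ell(b)$. Condition (e) is the same splitting specialised to a single element: decomposing a block of $i+j$ equal entries into one of size $i$ and one of size $j$ identifies $\gamma_i(a)\gamma_j(a)$ with $\binom{i+j}{i}\gamma_{i+j}(a)$, the binomial coefficient counting the ways of interleaving the two sub-blocks.

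The multiplicative statements (g) and (h) carry the real weight. For (g) I would use that the shuffle maps make $N$ lax symmetric monoidal, so that the divided power of a product can be computed by applying the construction to $ab$ and redistributing degeneracies across the two factors; the three asserted expressions then coincide after trading powers for divided powers via $a^i = i!\,\gamma_i(a)$ and \eqref{eq:powersdivp}. The main obstacle is the composition axiom (h). Here $\gamma_i(\gamma_j(a))$ must be read as a shuffle of $i$ blocks, each of which is itself the $\gamma_j$-shuffle of $j$ blocks of size $n$, and the governing symmetries are the wreath product $\Sigma_i \wr \Sigma_j$ sitting inside $\Sigma_{ij}$; the coefficient $\tfrac{(ij)!}{i!(j!)^i}$ of Definition~\ref{def:gradeddivpowers}(h) is exactly the count of cosets of this wreath product. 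I expect the bookkeeping of the nested degeneracy patterns---proving that they reassemble into the flat shuffles $\mathrm{Sh}(\underbrace{n,\ldots,n}_{ij})$ with precisely this multiplicity---to be the most delicate and error-prone step, with \eqref{eq:prod} supplying the integrality of the coefficient that makes the identity meaningful over an arbitrary $R$.

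As a cross-check of the combinatorics, and an alternative route to (e)--(h), I would exploit naturality: all the $\gamma_i$ are natural in $A_\bullet$ and fit the invariants description \eqref{eq:invariants}--\eqref{eq:monad}. Since each axiom is a natural identity with fixed integer coefficients, it suffices to verify it on a torsion-free universal example over a characteristic-zero base, where necessarily $\gamma_i(a) = a^i/i!$ and every axiom collapses to an elementary identity in a commutative $\mathbb{Q}$-algebra; functoriality then transports the identities to general $R$ without the explicit coset count of (h).
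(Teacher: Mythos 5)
The paper does not actually prove this proposition: it simply sets $\gamma_0(a)=1$, $\gamma_1(a)=a$ and refers to Fresse \cite[\S 2.2]{F} for the verification of the axioms. So the comparison here is really between your sketch and the cited source, and your primary route --- reducing each axiom to a combinatorial identity among shuffle cosets, with the $\Sigma_i$-orbit/sign analysis handling well-definedness and axiom (c) --- is indeed the standard argument and is sound in outline for (a)--(f). But two things keep this from being a proof. First, the load-bearing axioms (g) and especially (h) are only described, not established: for (h) you correctly identify that the coefficient $\tfrac{(ij)!}{i!(j!)^i}$ should appear as the index of a wreath product in $\Sigma_{ij}$, but the group whose index equals that number is $\Sigma_j\wr\Sigma_i$ (of order $(j!)^i\, i!$), not $\Sigma_i\wr\Sigma_j$ as written, and the actual matching of nested degeneracy patterns against flat shuffles --- which is where Cartan and Fresse spend their effort --- is exactly the step you defer.

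Second, the fallback you offer to avoid that bookkeeping does not work. The universal example for a natural operation $\pi_n\to\pi_{ni}$ of simplicial commutative $R$-algebras is the free simplicial commutative algebra on a generator in degree $n$; its homotopy is the homology of symmetric powers of a sphere (equivalently, built from the homology of Eilenberg--Mac Lane spaces), which is heavily torsion. There is no torsion-free universal example to which one could restrict, and an identity with integer coefficients verified after tensoring with $\mathbb{Q}$ does not descend to an identity in a group with torsion --- integrality of $\tfrac{(ij)!}{i!(j!)^i}$ only guarantees that the formula makes sense over $R$, not that it holds. (Indeed the whole point of divided powers is that they are \emph{not} determined by the ring structure in the presence of torsion.) So the cross-check cannot substitute for the explicit coset count, and as it stands the proof of (h), and hence of the proposition, is incomplete; to finish one should either carry out the wreath-product combinatorics or do as the paper does and invoke \cite[\S 2.2]{F}.
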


\section{A large symmetric monoidal product on chain
  complexes}
We will use the following product later in order to investigate
divided power structures on chain complexes. 
\begin{defn}
We define the \emph{large tensor product} of two chain complexes $C_*$
and $C'_*$ to be 
$$ C_* \ttensor C'_* := N(\Gamma(C_*) \hot \Gamma(C'_*)).$$
\end{defn}
Note that the large tensor product deserves its name: the degenerate
elements in $\Gamma(C_*) \hot
\Gamma(C'_*)$ are only the ones that are images of the maps $s_i \hot
s_i$, and in general $N(\Gamma(C_*) \hot \Gamma(C'_*))$ is much larger
than the ordinary tensor product $C_* \otimes C'_* \cong  
N(\Gamma(C_*)) \otimes N(\Gamma(C'_*))$. 

As a concrete example, consider the normalized chain complex on
the standard simplex $\Z[\Delta(1)]$. Let us denote a monotone map $f
\colon [n] \ra [1]$ by an $(n+1)$-tupel corresponding to its image, so
that for instance 
$$f\colon [3] \ra [1], f(0) = 0, f(1) = f(2) = f(3) = 1$$
is represented by $(0,1,1,1)$. 

As $\Z[\Delta(1)]$ has
non-degenerate simplices only  in degrees zero and one corresponding to
the monotone maps $(0)$ and $(1) \in \Delta([1],[0])$ and $(0,1)$ in
$\Delta([1], [1])$, its normalization  
$C_* = N(\Z[\Delta(1)])$ is the chain complex
$$ \Z \oplus \Z \leftarrow \Z \leftarrow 0 \leftarrow  \ldots$$
and the boundary map sends the generator $(0,1)$ to $(0) -
(1)$. Therefore, $C_* \otimes C_*$ is a chain complex, that is
concentrated in degrees zero, one and two with chain groups of rank
four, four and one respectively. Note that 
$$ N(\Gamma(N(\Z[\Delta(1)])) \hot \Gamma(N(\Z[\Delta(1)]))) \cong
N(\Z[\Delta(1)] \hot \Z[\Delta(1)]).$$
Thus for instance in degree one, $C_* \ttensor
C_*$ is of rank seven.    
\section{Equivalences of categories and transfer of monoidal
  structures} \label{sec:equivalence}
If $F\colon \mathcal{C} \ra \mathcal{D}$ and $G\colon \mathcal{D} \ra 
\mathcal{C}$ is a pair of functors that constitute an equivalence of 
categories and if $(\mathcal{D}, \hot, 1, \hottau)$ is symmetric monoidal,
then we can transfer the symmetric monoidal structure on $\D$ to one
on $\C$ in the following way. 

\begin{itemize}
\item
As for chain complexes, one defines a 
product $\ttensor$ via $ C_1 \ttensor C_2 = G(FC_1 \hot FC_2)$ for
objects $C_1, C_2$ of $\C$. 
\item
As an equivalence 
$$ \ttau_{C_1,C_2} \colon C_1 \ttensor C_2 = G(FC_1 \hot FC_2) \ra  
G(FC_2 \hot FC_1) = C_2 \ttensor C_1$$
we take $G(\hottau_{FC_1,FC_2})$.
\item
The unit for the symmetric monoidal structure is $G(1)$.
\end{itemize}
For later reference, we spell out some of the structural
isomorphisms. Recall that any equivalence of categories gives rise to
an adjoint equivalence \cite[IV.4]{ML}; in particular the unit and
counit of the adjunction are isomorphisms. We want to denote the
natural isomorphism from $GFC$ to $C$ for $C$ an object of $\C$ by
$\varphi_C$ and the one from $FGD$ to $D$ by $\psi_D$ for all objects
$D$ in $\D$. Then the identities 
\begin{equation}
  \label{eq:isos}
F(\varphi_C) = \psi_{FC} \text{ and } G(\psi_D) = \varphi_{GD}  
\end{equation}
hold for all $C$ and $D$.

For the left unit we have to identify  $C$ with 
$G(1) \ttensor C$ and to this end we use the morphism 
$$\xymatrix@1{{\tilde{\ell} \colon C} \ar[r]^{\varphi^{-1}} & {G(F(C))}
  \ar[r]^(0.45){G(\hat{\ell})} & {G(1 \hot F(C))}
  \ar[rr]^(0.35){G(\psi^{-1} \hot \id)}  & & {G(F(G(1)) \hot
    F(C)) = G(1) \ttensor C}}$$
where $\hat{\ell}$ is the left unit isomorphism for $\hot$. The right
unit is defined similarly. 

The associativity isomorphism $\tilde{\alpha}$ is given in terms of
the one for $\hot$, $\hat{\alpha}$ as  
$$ \tilde{\alpha} := G(\id \hot \psi)^{-1} \circ G(\hat{\alpha}) \circ
G(\psi \hot \id):$$

$$\xymatrix{
{G(FG(F(C) \hot F(C)) \hot
 F(C))} \ar[dd]_{G(\psi \hot \id)}
\ar[rr]^{\tilde{\alpha}} & &{G(F(C)\hot FG(F(C) 
\hot F(C)))} \ar[dd]^{G(\id \hot \psi)} \\
 & & \\
{G((F(C) \hot F(C)) \hot F(C))} 
\ar[rr]^{G(\hat{\alpha})} & &{G(F(C) \hot (F(C) \hot
  F(C)))}  
}$$

Then it is a tedious, but straightforward task to show the following
result. A proof in the non-symmetric setting can be found in
\cite[Theorem 3]{Q}. 
\begin{prop}
The category $(\C, \ttensor, G(1), \ttau)$ is a symmetric monoidal
category.  
\end{prop}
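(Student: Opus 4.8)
The plan is to verify the three coherence axioms of a symmetric monoidal category for the transferred structure $(\C, \ttensor, G(1), \ttau)$, namely the pentagon axiom for $\tilde\alpha$, the triangle axiom relating $\tilde\alpha$ to the unit isomorphisms $\tilde\ell$ and its right counterpart, and the two hexagon/symmetry conditions governing $\ttau$ (that $\ttau_{C_2,C_1}\circ\ttau_{C_1,C_2}=\id$ together with the hexagon relating $\ttau$ and $\tilde\alpha$). The guiding principle throughout is that every transferred morphism is of the form $G(-)$ applied to a $\hot$-coherence morphism, conjugated by the natural isomorphisms $\varphi$, $\psi$, and that $G$ is a functor while $\varphi,\psi$ are \emph{natural} isomorphisms; so each transferred axiom will be reduced, by naturality, to the image under $G$ of the corresponding axiom already known to hold in $(\D,\hot,1,\hottau)$.

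First I would fix careful notation for all the structural isomorphisms, extending the explicit formulas already given for $\tilde\ell$ and $\tilde\alpha$ to the right unit $\tilde r$ and recording that $\ttau_{C_1,C_2}=G(\hottau_{FC_1,FC_2})$ by definition. The single most useful preliminary is the pair of identities \eqref{eq:isos}, $F(\varphi_C)=\psi_{FC}$ and $G(\psi_D)=\varphi_{GD}$, which are exactly what lets one slide the $\varphi$'s and $\psi$'s past $F$ and $G$; I would want these in hand before touching any diagram. I would then treat each axiom as a large diagram in $\C$ whose every edge is either $G$ of a $\hot$-morphism or one of the adjunction isomorphisms, and systematically rewrite it. The mechanism is always the same: use functoriality of $G$ to merge adjacent $G(-)$ arrows, use naturality of $\psi$ (resp. $\varphi$) to commute an adjunction isomorphism past a $G(\hat\alpha)$ or $G(\hottau)$, and use \eqref{eq:isos} to cancel $\psi\circ\psi^{-1}$ or $\varphi\circ\varphi^{-1}$ pairs that arise at object boundaries. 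After this bookkeeping, the pentagon for $\tilde\alpha$ collapses to $G$ applied to the pentagon for $\hat\alpha$, the triangle to $G$ of the triangle for $\hat\ell,\hat r$, and the hexagon to $G$ of the hexagon for $\hottau$; since $(\D,\hot,1,\hottau)$ is symmetric monoidal these hold, and applying the functor $G$ preserves commutativity.

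For the symmetry relation itself, I would note $\ttau_{C_2,C_1}\circ\ttau_{C_1,C_2}=G(\hottau_{FC_2,FC_1})\circ G(\hottau_{FC_1,FC_2})=G(\hottau_{FC_2,FC_1}\circ\hottau_{FC_1,FC_2})=G(\id)=\id$, which is immediate from functoriality once one observes the definition contains no $\varphi,\psi$ conjugation to unwind. This is the cleanest of the axioms and I would dispatch it first as a warm-up that also illustrates the general reduction mechanism.

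The main obstacle I expect is purely organizational rather than conceptual: in the pentagon and hexagon the objects carry several nested applications of $F$ and $G$ (as already visible in the associativity square, where objects such as $G(FG(F(C)\hot F(C))\hot F(C))$ appear), so matching up \emph{which} instance of $\psi$ or $\varphi$ to invoke at \emph{which} subobject, and checking that the subscripts agree after each naturality move, is error-prone and tedious. The conceptual content is entirely carried by \eqref{eq:isos} and naturality; the risk lies in mislabelling the adjunction isomorphisms on the large composite objects. I would manage this by inserting $\varphi^{-1}\circ\varphi$ (or $\psi^{-1}\circ\psi$) factorizations at each intermediate object so that every edge becomes visibly of the form $G(\text{$\hot$-coherence})$ sandwiched by cancelling adjunction pairs, after which the reduction to the $\hot$-axioms is mechanical. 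As the statement itself signals, this is a tedious but straightforward verification, and the proof of the non-symmetric case in \cite[Theorem 3]{Q} covers the unit and associativity coherences; what remains new here is only the symmetry and hexagon, handled exactly as above.
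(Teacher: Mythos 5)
Your proposal is correct and follows exactly the route the paper intends: the paper gives no written proof, calling the verification ``tedious, but straightforward'' and citing \cite[Theorem 3]{Q} for the non-symmetric setting, and your reduction of each coherence axiom to its counterpart in $(\D,\hot,1,\hottau)$ via functoriality of $G$, naturality of $\varphi$ and $\psi$, and the identities \eqref{eq:isos} is precisely that verification. The only content not covered by the cited reference---the involution property of $\ttau$ and the hexagon---is dispatched correctly by your observation that $\ttau$ is literally $G$ of $\hottau$ with no adjunction conjugation to unwind.
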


If $\C$ already has a symmetric monoidal structure, then we can
compare the old one to the new one as follows. 
\begin{prop} \label{prop:lsm}
If $(\C, \otimes, G(1), \tau)$ is a symmetric monoidal structure and
if $G\colon (\D, \hot, 1, \hottau) \ra (\C, \otimes, G(1), \tau)$ is
lax symmetric monoidal, then the identity functor 
$$ \id \colon (\C, \ttensor, G(1), \ttau) \lra 
(\C, \otimes, G(1), \tau)$$ 
is lax symmetric monoidal. 
\end{prop}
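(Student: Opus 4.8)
The plan is to equip the identity functor with the lax structure map built from the lax structure of $G$ together with the unit of the adjoint equivalence. Write $\lambda_{D_1,D_2}\colon G(D_1)\otimes G(D_2)\ra G(D_1\hot D_2)$ for the natural transformation that makes $G$ lax symmetric monoidal, and recall the natural isomorphism $\varphi_C\colon GFC\ra C$. First I would define
$$ \mu_{C_1,C_2}\colon C_1\otimes C_2 \xrightarrow{\varphi_{C_1}^{-1}\otimes\varphi_{C_2}^{-1}} GFC_1\otimes GFC_2 \xrightarrow{\lambda_{FC_1,FC_2}} G(FC_1\hot FC_2)=C_1\ttensor C_2. $$
Naturality of $\mu$ in both variables is immediate from naturality of $\varphi$ and of $\lambda$. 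Since both monoidal structures on $\C$ share the unit object $G(1)$, I take the unit constraint of $\id$ to be $\id_{G(1)}$ (equivalently, the one coming from the unit map of $G$). It then remains to check the three coherence axioms of a lax symmetric monoidal functor: compatibility with the symmetries, with the associators, and with the unitors.

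The compatibility with symmetries is the clean part and I would dispatch it first. Using $\ttau_{C_1,C_2}=G(\hottau_{FC_1,FC_2})$ and the hypothesis that $G$ is lax \emph{symmetric} monoidal, which gives $G(\hottau_{D_1,D_2})\circ\lambda_{D_1,D_2}=\lambda_{D_2,D_1}\circ\tau_{GD_1,GD_2}$, one computes
$$ \ttau_{C_1,C_2}\circ\mu_{C_1,C_2} = \lambda_{FC_2,FC_1}\circ\tau_{GFC_1,GFC_2}\circ(\varphi_{C_1}^{-1}\otimes\varphi_{C_2}^{-1}). $$
Naturality of the symmetry $\tau$ of $(\C,\otimes)$ moves the pair $\varphi^{-1}$ past $\tau$ and swaps the two factors, turning the right-hand side into $\mu_{C_2,C_1}\circ\tau_{C_1,C_2}$. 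This is exactly the required symmetry square.

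The associativity axiom is where the real work lies, and is the step I expect to be the main obstacle. Here I have to compare the two associators: $\tilde{\alpha}$ for $\ttensor$ was defined in the excerpt in terms of $\hat{\alpha}$ and the \emph{counit} $\psi$, whereas $\mu$ is built from the \emph{unit} $\varphi$. The plan is to expand both legs of the coherence diagram relating $\mu$, $\tilde{\alpha}$ and the associator $\alpha$ of $\otimes$, and then rewrite everything so that the lax-associativity coherence of $G$ (compatibility of $\lambda$ with $\hat{\alpha}$ and $\alpha$) applies. Reconciling the occurrences of $\varphi$ coming from $\mu$ with the occurrences of $\psi$ coming from the definition of $\tilde{\alpha}$ is precisely what forces me to invoke the triangle identities of the adjoint equivalence and the identities $F(\varphi_C)=\psi_{FC}$, $G(\psi_D)=\varphi_{GD}$ of \eqref{eq:isos}; feeding these in, together with naturality of $\lambda$, $\varphi$ and $\psi$, collapses the diagram. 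This is a long but mechanical chase, of the same flavour as the verification that $\tilde{\alpha}$ satisfies the pentagon.

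Finally I would verify the two unit triangles. Using the explicit description of the left unitor $\tilde{\ell}$ from the excerpt (built from $\hat{\ell}$, $\varphi$ and $\psi$) together with the lax unit axiom of $G$, the composite $\tilde{\ell}_{C}\circ\mu_{G(1),C}$ reduces, after again applying \eqref{eq:isos} and naturality, to the left unitor of $(\C,\otimes)$; the right unit is handled symmetrically. With the symmetry, associativity and unit axioms all in place, $(\id,\mu,\id_{G(1)})$ is a lax symmetric monoidal functor from $(\C,\ttensor,G(1),\ttau)$ to $(\C,\otimes,G(1),\tau)$, as claimed.
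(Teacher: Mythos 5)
Your proposal is correct and takes essentially the same route as the paper: the lax structure map is exactly the paper's $\lambda_{C_1,C_2}=\Upsilon_{F(C_1),F(C_2)}\circ(\varphi_{C_1}^{-1}\otimes\varphi_{C_2}^{-1})$, with the symmetry square following from naturality of $\tau$ and the symmetry of $G$'s lax structure. You simply spell out the associativity and unit coherences (via \eqref{eq:isos} and the triangle identities) that the paper leaves implicit.
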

\begin{proof}
We have to construct maps $\lambda_{C_1, C_2} \colon C_1 \otimes C_2 
\rightarrow C_1 \ttensor C_2$ that are natural in $C_1$ and $C_2$ and
that render the diagrams 
$$ \xymatrix{
{C_1 \otimes C_2} \ar[d]_{\tau} \ar[r]^{\lambda_{C_1,C_2}} & {C_1 \ttensor C_2}
\ar[d]^{\ttau_{C_1,C_2}}\\
{C_2 \otimes C_1} \ar[r]^{\lambda_{C_2,C_1}} & {C_2 \ttensor C_1}
}$$ 
commutative for all $C_1, C_2 \in \C$. Let $\Upsilon$ be the
transformation that turns $G$ into a lax symmtric monoidal functor. 
We define 
$$ \xymatrix@1{{\lambda_{C_1,C_2}\colon C_1 \otimes C_2} 
\ar[rr]^{\varphi^{-1}_{C_1} \otimes \varphi^{-1}_{C_2}} & &
{GF(C_1) \otimes GF(C_2)} \ar[rr]^(0.45){\Upsilon_{F(C_1), F(C_2)}} & 
& {G(F(C_1) \hot  F(C_2)) = C_1 \ttensor C_2}}.$$
\end{proof}
\begin{cor}
Every commutative monoid in $(\C, \ttensor, G(1), \ttau)$ is a
commutative monoid in $(\C, \otimes, G(1), \tau)$.
\end{cor}
The functor $F$ compares commutative monoids in the categories $\C$
and $\D$  as follows. 
\begin{thm} \label{thm:gammac}
An object $F(C)$ is a commutative monoid in $(\D,\hot,1,\hottau)$ if
and  only if $C$ is a commutative monoid in
$(\C, \ttensor, G(1), \ttau)$. Moreover, the assignment
$C \mapsto F(C)$ is a functor from the category of 
commutative monoids in $(\C, \ttensor, G(1), \ttau)$ to
the category of commutative monoids in $(\D,\hot,1,\hottau)$. 
\end{thm}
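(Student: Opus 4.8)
The plan is to observe that the construction of Section~\ref{sec:equivalence} is designed precisely so that $F$ becomes a strong symmetric monoidal functor from $(\C, \ttensor, G(1), \ttau)$ to $(\D, \hot, 1, \hottau)$, and then to invoke the standard fact that such a functor, being part of an equivalence, both preserves and reflects commutative monoids. Concretely, by definition $F(C_1 \ttensor C_2) = FG(FC_1 \hot FC_2)$, so the counit isomorphism $\psi_{FC_1 \hot FC_2}$ provides a natural isomorphism $F(C_1 \ttensor C_2) \cong FC_1 \hot FC_2$, while $\psi_1 \colon F(G(1)) \to 1$ serves as the unit comparison. I would take these instances of $\psi$ as the monoidal structure maps of $F$.

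The first real task is to verify that $F$, equipped with these structure isomorphisms, satisfies the coherence axioms of a symmetric monoidal functor. For associativity one checks that applying $F$ to the defining formula $\tilde{\alpha} = G(\id \hot \psi)^{-1} \circ G(\hat{\alpha}) \circ G(\psi \hot \id)$ and conjugating by the structure maps reproduces $\hat{\alpha}$; this is a diagram chase using naturality of $\psi$ together with the identity $F(\varphi_C) = \psi_{FC}$ from \eqref{eq:isos}. The unit coherence is dispatched identically from the definition of $\tilde{\ell}$, and the symmetry coherence is the most transparent of the three: since $\ttau = G(\hottau)$ we have $F(\ttau_{C_1,C_2}) = FG(\hottau_{FC_1,FC_2})$, and naturality of $\psi$ identifies this with $\hottau_{FC_1,FC_2}$ after conjugating by the structure maps. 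These are the same tedious-but-straightforward verifications alluded to earlier in this section, and I expect them to be the main obstacle: no single one is hard, but each is a genuine check rather than a formality.

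Granting that $F$ is strong symmetric monoidal, the forward implication and the ``Moreover'' clause follow at once, since a lax symmetric monoidal functor sends a commutative monoid $(C, m, e)$ in $(\C, \ttensor, G(1), \ttau)$ to a commutative monoid on $F(C)$ in $(\D, \hot, 1, \hottau)$ and carries monoid morphisms to monoid morphisms; this is exactly the asserted functoriality $C \mapsto F(C)$.

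For the converse I would run the same argument through the quasi-inverse $G$. By the symmetry of the construction $G$ is itself strong symmetric monoidal --- its structure isomorphisms $G(D_1) \ttensor G(D_2) = G(FG(D_1) \hot FG(D_2)) \cong G(D_1 \hot D_2)$ are again built from $\psi$, and $G(1)$ is literally the unit --- and the counit $\varphi$ is a monoidal natural isomorphism by the identities in \eqref{eq:isos}. Hence if $F(C)$ is a commutative monoid in $(\D, \hot, 1, \hottau)$, then $G(F(C))$ is a commutative monoid in $(\C, \ttensor, G(1), \ttau)$, and transporting this structure along the monoidal isomorphism $\varphi_C \colon GF(C) \to C$ makes $C$ a commutative monoid. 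Combined with the forward direction, this yields the claimed equivalence.
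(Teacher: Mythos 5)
Your proposal is correct, but it organizes the argument differently from the paper. You first establish that $F$ is a strong symmetric monoidal functor $(\C,\ttensor,G(1),\ttau)\to(\D,\hot,1,\hottau)$, with structure isomorphisms given by instances of the counit $\psi$, and then quote the general principle that (strong/lax) symmetric monoidal functors send commutative monoids to commutative monoids and monoid maps to monoid maps; the converse is handled by running the same principle through $G$ and transporting along $\varphi_C$. The paper never isolates the monoidal-functor statement: it uses fullness of $G$ to produce $\hat{\mu}$ with $G(\hat{\mu})=\varphi^{-1}\circ\tilde{\mu}$ and then verifies commutativity, unitality and associativity of $\hat{\mu}$ one axiom at a time via faithfulness of $F$ and $G$ and explicit diagram chases (and similarly for the converse and for functoriality). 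A short computation with $F(\varphi_C)=\psi_{FC}$ and naturality of $\psi$ shows your $\hat{\mu}=F(\tilde{\mu})\circ\psi_{FC\hot FC}^{-1}$ coincides with the paper's, so the two proofs construct the same structures; the difference is where the diagram-chasing is spent. Your route front-loads the work into the coherence axioms for $F$ (which you sketch but do not carry out --- that is where the real content of your proof lives, and it is comparable in length to the paper's verifications), and in exchange yields a reusable statement ($F$ and $G$ are strong symmetric monoidal, $\varphi$ and $\psi$ monoidal isomorphisms) that immediately gives the equivalence of the two categories of commutative monoids, not just the object-level biconditional. The paper's route is more elementary and self-contained. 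One cosmetic point: for the lax structure the unit comparison should be $\psi_1^{-1}\colon 1\to F(G(1))$ rather than $\psi_1$, and in the converse you do not actually need $\varphi$ to be a \emph{monoidal} natural isomorphism --- transporting a commutative monoid structure along any isomorphism of underlying objects suffices.
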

\begin{proof}
If we assume that $C$ is a commutative monoid in $(\C, \ttensor, G(1),
\ttau)$, then $C$ has an associative 
multiplication
$$\tilde{\mu} \colon C \ttensor C = G(F(C)\hot F(C))
\lra C$$
that satisfies $\tilde{\mu} \circ \ttau = \tilde{\mu}$ and there is a
unit map $j\colon G(1) \ra C$. We consider the
composition $\varphi^{-1} \circ \tilde{\mu}\colon G(F(C)\hot
F(C)) \ra GF(C)$. As $G$ is an equivalence of
categories, it is a full functor, \ie, the morphism $\varphi^{-1} \circ
\tilde{\mu}$ is of the form $G(\hat{\mu})$ for some morphism 
$\hat{\mu}\colon F(C) \hot
F(C) \ra F(C)$ in $\D$. We will show that $\hat{\mu}$ turns
$F(C)$ into a commutative monoid. We define the unit
map as $i = F(j)\circ \psi^{-1} \colon 1 \ra F(C)$.

As $\ttau = G(\hottau)$, the commutativity of $\hat{\mu}$ follows from
the one  of
$\tilde{\mu}$ and the fact that the functor $G$ is faithful. 

In order to check the unit property of $i$ we have to show that the
following diagram commutes: 
\begin{equation} \label{diag:leftunit}
\xymatrix{
{F(C)} \ar@{=}[d]\ar[r]^{\hat{\ell}} & {1 \hot F(C)}
\ar[rr]^{\psi^{-1} \hot \id} & & {F(G(1)) \hot F(C)} 
\ar[d]^{F(j) \hot \id} \\
{F(C)} & & & {F(C) \hot F(C).} \ar[lll]^{\hat{\mu}}
}
\end{equation}
As $j$ is a unit for the multiplication $\tilde{\mu}$ we know that 
$$ \tilde{\mu} \circ (j \ttensor \id) \circ G(\psi^{-1} \hot \id)
\circ G(\hat{\ell}) \circ \varphi_C^{-1} = \id_{C}.$$
Applying the faithful functor $F$ to
this identity and using the 
definition of $\hat{\mu}$, we get that 
$$   FG(\hat{\mu}) \circ F(j \ttensor \id) \circ F 
G(\psi^{-1} \hot \id) \circ
FG(\hat{\ell})  = \id_{FGF(C)}.$$
By the very definition, $F(j \ttensor \id)$ is 
$ F(G(F(j) \hot \id))$ and thus via the faithfulness of $FG$ we can
conclude that diagram \eqref{diag:leftunit} commutes. The analogous
statement for the right unit can be shown similarly and hence $i$ is a unit. 

For the associativity of the multiplication $\hat{\mu}$ we
have to show that the inner pentagon in the following diagram
commutes. 
$$\xymatrix@C=0pt{
{G(F G(F(C) \hot F(C)) \hot
  F(C))}\ar@/_6pc/[ddd]_{\tilde{\mu} \ttensor \id}
\ar[dd]^{G(\psi \hot \id)}
\ar[rr]^{\tilde{\alpha}} & &{G(F(C)\hot F
  G(F(C) \hot F(C)))} \ar[dd]_{G(\id \hot \psi)} 
\ar@/^6pc/[ddd]^{\id \ttensor \tilde{\mu}}\\
 & & \\
{G((F(C) \hot F(C)) \hot F(C))} 
\ar[d]^{G(\hat{\mu} \hot \id)}
\ar[rr]^{G(\hat{\alpha})} & &{G(F(C) \hot (F(C) \hot
  F(C)))}  \ar[d]_{G(\id \hot \hat{\mu})}\\
{G(F(C) \hot F(C))} \ar[dr]_{G(\hat{\mu})} \ar@/_2pc/[ddr]_{\tilde{\mu}}& 
&{G(F(C) \hot F(C))}\ar[dl]^{G(\hat{\mu})} \ar@/^2pc/[ddl]^{\tilde{\mu}}\\
&{GF(C)} \ar[d]^{\varphi}&\\
&{C}&
}$$
The outer diagram commutes because $\tilde{\mu}$ is associative and
the upper square commutes because $\tilde{\alpha}$ is given in terms
of $\hat{\alpha}$ in this way. The only thing that remains to be
proven is that the outer wings commute. We prove the claim for the
left wing. Using the definitions  of the maps involved, we have to
show that 
$$ \xymatrix{
{G(F G(F(C) \hot F(C))\hot
  F(C))}\ar[dr]_{\tilde{\mu} \ttensor \id =
  G(F(\tilde{\mu}) \hot \id) \phantom{blabla}} \ar[rr]^{G(\psi_{F(C)\hot F(C)} \hot
  \id)} & {} & {G((F(C) \hot F(C))\hot F(C))}
\ar[dl]^{G(\hat{\mu}\hot \id)}
 \\
{} & {G(F(C)\hot F(C))} & {}
}$$
commutes. We know that 
$$F G(\hat{\mu}) =
F(\varphi^{-1}_{C}) \circ F(\tilde{\mu})$$
and therefore 
$$ \psi_{F(C)} \circ F G(\hat{\mu}) =
F(\tilde{\mu}).$$
The naturality of $\psi$ implies that 
$$ \psi_{F(C)} \circ F G(\hat{\mu}) = \hat{\mu} \circ 
(\psi_{F(C) \hot F(C)})$$
and the claim follows.  

If $F(C)$ is a commutative monoid with respect to $\hot$ with multiplication 
$\hat{\mu}$ and unit $i \colon 1 \ra F(C)$, then  we claim that 
$\tilde{\mu} := \varphi_{C} \circ G(\mu)$ and 
$j := \varphi_{C} \circ G(i)$ give $C$ the structure of 
a commutative monoid  with respect to $\ttensor$. 

The fact that $\tilde{\mu}$ is commutative follows directly because
$\ttau = G(\hottau)$. The proof of the unit axiom and of associativity
use the same diagrams as above with the arguments reversed. 

It remains to show that a morphism $f \colon C_1 \ra C_2$ of
commutative monoids with respect to $\ttensor$ gives rise to a
morphism $F(f)\colon F(C_1) \ra F(C_2)$ of commutative monoids with
respect to $\hot$. It suffices to show that the outer diagram in 
$$\xymatrix{
{G(F(C_1) \hot F(C_1))} \ar@/_3pc/[dd]_{G(\hat{\mu})} \ar[rr]^{G(F(f) \hot
  F(f))} \ar[d]_{\tilde{\mu}}  & {} & {G(F(C_2) \hot F(C_2))}
\ar[d]^{\tilde{\mu}}  \ar@/^3pc/[dd]^{G(\hat{\mu})}\\
{C_1} \ar[rr]^{f} & {} & {C_2}  \\
{GF(C_1)} \ar[u]^{\varphi_{C_1}} \ar[rr]^{GF(f)}& {} &
{GF(C_2)} \ar[u]_{\varphi_{C_2}}
}$$
commutes. The upper square commutes by assumption, the lower square
commutes because $\varphi$ is natural and the wings commute by the
very defintion of $\hat{\mu}$ in terms of $\tilde{\mu}$.

\end{proof}

\section{Divided power structures and commutative monoids}
\label{sec:divpowers}

If we apply the above results to the Dold-Kan 
correspondence with $\Gamma=F$, $N=G$, $\mathcal{C}= \chain^R$
and $\mathcal{D}= \sab$ and the large tensor product of chain complexes, then
we obtain the following statements that we collect in one theorem. 

\begin{thm} \label{thm:meta}
\begin{enumerate}
\item[]
\item
The category of chain complexes with the large tensor product is a
symmetric monoidal category with $N(\uR)$ being the unit of
the monoidal structure and  
$$ \ttau = N(\hottau_{\Gamma(C_*), \Gamma(C'_*)}) \colon 
N(\Gamma(C_*) \hot \Gamma(C'_*)) \lra N(\Gamma(C'_*) \hot
\Gamma(C_*))$$
as the twist. 
\item
The identity functor 
$$ \id \colon (\chain^R, \ttensor, N(\uR), \ttau) \lra 
(\chain^R, \otimes, (R,0), \tau)$$ 
is lax symmetric monoidal.
\item
Every commutative monoid in $(\chain^R, \ttensor, N(\uR),
\ttau)$ is a differential graded commutative $R$-algebra.
\item
A simplicial $R$-module $\Gamma(C_*)$ is a simplicial commutative
$R$-algebra if and only if the chain complex $C_*$ is a commutative monoid in
$(\chain^R, \ttensor, N(\uR), \ttau)$. The assignment
$C_* \mapsto \Gamma(C_*)$ is a functor from the category of 
commutative monoids in $(\chain^R, \ttensor, N(\uR), \ttau)$ to
the category of simplicial commutative $R$-algebras. 
\end{enumerate}
\end{thm}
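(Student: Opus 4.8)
The plan is to recognize Theorem \ref{thm:meta} as a direct specialization of the general machinery developed in Section \ref{sec:equivalence}, applied to the Dold-Kan adjunction. The key observation is that all four claims follow by instantiating the abstract propositions and theorem already proved, with $F = \Gamma$, $G = N$, $\C = \chain^R$, $\D = \sab$, together with the facts recorded in Section \ref{sec:doldkan}: that $(N, \Gamma)$ is an adjoint equivalence, that $N(\uR) \cong (R,0)$, and that $N$ (i.e., the shuffle map) is lax symmetric monoidal. So rather than reproving anything, I would carefully check that the hypotheses of each earlier result are met in this concrete setting, and then cite them.

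\textbf{First,} statement (a) is immediate from the general transfer Proposition: since $(\sab, \hot, \uR, \hottau)$ is symmetric monoidal and $(N,\Gamma)$ is an equivalence of categories, the transferred structure $(\chain^R, \ttensor, N(\uR), \ttau)$ is symmetric monoidal, with $\ttensor$ and $\ttau$ being exactly the large tensor product and its twist as defined in Section \ref{sec:doldkan}. \textbf{Next,} statement (b) follows from Proposition \ref{prop:lsm}: the ordinary tensor product makes $(\chain^R, \otimes, (R,0), \tau)$ symmetric monoidal, and $N$ is lax symmetric monoidal via the shuffle maps (this is precisely the content recalled earlier, including commutativity of the shuffle-twist diagram), so the identity functor is lax symmetric monoidal as claimed. \textbf{Then} statement (c) is the Corollary following Proposition \ref{prop:lsm}: a lax symmetric monoidal functor carries commutative monoids to commutative monoids, so any commutative monoid for $\ttensor$ is one for $\otimes$, i.e. a differential graded commutative $R$-algebra.

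\textbf{Finally,} statement (d) is exactly Theorem \ref{thm:gammac} read with $F = \Gamma$ and $C = C_*$: that theorem says $F(C) = \Gamma(C_*)$ is a commutative monoid in $(\sab, \hot, \uR, \hottau)$—which is by definition a simplicial commutative $R$-algebra—if and only if $C_*$ is a commutative monoid in $(\chain^R, \ttensor, N(\uR), \ttau)$, and that $C_* \mapsto \Gamma(C_*)$ is the asserted functor on commutative monoids. I would simply verify that a commutative monoid in $(\sab, \hot, \uR, \hottau)$ is the same thing as a simplicial commutative $R$-algebra, which is immediate from the definition of $\hot$ as the degreewise tensor product.

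\textbf{The main obstacle} is not really a mathematical difficulty but a bookkeeping one: I must confirm that the general results of Section \ref{sec:equivalence} apply verbatim, paying attention to the direction of the adjunction. Since $N$ is the \emph{left} adjoint and it is $N = G$ that plays the role of the lax monoidal functor, I should double-check that Proposition \ref{prop:lsm} was stated for $G$ being lax symmetric monoidal (it was), and that it is indeed $N$, not $\Gamma$, that is lax monoidal via the shuffle maps—the excerpt emphasizes that $\Gamma$ is \emph{not} lax symmetric monoidal, so getting the roles right is the only place where one could slip. Once the dictionary $F = \Gamma$, $G = N$ is fixed, every clause transfers mechanically.
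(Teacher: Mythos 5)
Your proposal is correct and coincides with the paper's own treatment: the paper proves Theorem \ref{thm:meta} precisely by instantiating the general transfer results of Section \ref{sec:equivalence} (the transfer proposition, Proposition \ref{prop:lsm} and its corollary, and Theorem \ref{thm:gammac}) with $F=\Gamma$, $G=N$, $\C=\chain^R$, $\D=\sab$. Your added caution about which functor is lax symmetric monoidal ($N$ via the shuffle maps, not $\Gamma$) is exactly the right point to check and is consistent with the paper.
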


\begin{cor} \label{cor:divhomology}
Every commutative monoid $C_*$ in $(\chain^R, \ttensor, N(\uR),
\ttau)$ has a divided power structure on its homology. 
\end{cor}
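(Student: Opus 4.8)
The plan is to deduce this directly from Theorem \ref{thm:meta} and the divided power structure on the homotopy groups of a simplicial commutative algebra recorded in Section 4 (Definition \ref{def:divsimpl}). Let $C_*$ be a commutative monoid in $(\chain^R, \ttensor, N(\uR), \ttau)$. The first step is to apply Theorem \ref{thm:meta}(d), which tells us that $\Gamma(C_*)$ is a simplicial commutative $R$-algebra. This is the key translation that moves us from a purely algebraic structure on chain complexes to an object on which the simplicial divided power operations are defined.

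Next I would identify the homology of $C_*$ with the homotopy of $\Gamma(C_*)$. Since the canonical identification $\varphi_{C_*}\colon N\Gamma(C_*)\cong C_*$ is an isomorphism of chain complexes, passing to homology gives $H_*(C_*)\cong H_*(N\Gamma(C_*)) = \pi_*(\Gamma(C_*))$. Here one should check that this isomorphism is multiplicative: by Theorem \ref{thm:meta}(c) the monoid $C_*$ is a differential graded commutative $R$-algebra, so $H_*(C_*)$ is a graded commutative $R$-algebra, and this structure corresponds under $\varphi_{C_*}$ to the ring structure on $\pi_*(\Gamma(C_*))$ induced by the simplicial commutative ring $\Gamma(C_*)$, precisely because the two monoid structures match under the equivalence of Theorem \ref{thm:gammac}.

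Finally I would invoke the proposition of Section 4: the homotopy groups $\pi_*(\Gamma(C_*))$ of the simplicial commutative $R$-algebra $\Gamma(C_*)$ carry a system of divided powers $\gamma$ satisfying all the conditions of Definition \ref{def:gradeddivpowers}. Transporting $\gamma$ along the multiplicative isomorphism $H_*(C_*)\cong \pi_*(\Gamma(C_*))$ then equips $H_*(C_*)$ with the asserted divided power structure.

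Since each step is simply an application of a result already in hand, there is no genuine obstacle; the only point requiring care is confirming that the graded commutative ring structure on $H_*(C_*)$ coming from $\ttensor$ agrees with the one on $\pi_*(\Gamma(C_*))$ under the Dold--Kan identification, so that the divided powers, which are governed by the multiplication via relations such as \eqref{eq:powersdivp}, transport faithfully.
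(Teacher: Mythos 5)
Your proposal is correct and matches the paper's (implicit) argument: the corollary is deduced from Theorem \ref{thm:meta}(d) by recognizing $\Gamma(C_*)$ as a simplicial commutative $R$-algebra, identifying $H_*(C_*)$ with $\pi_*(\Gamma(C_*))$ via $\varphi_{C_*}$, and invoking the divided power structure on homotopy groups of simplicial commutative rings from Section 4. Your extra care in checking that the two multiplications agree under the identification is a reasonable point to make explicit, but it is the same route the paper takes.
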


The converse of statement (c) of Theorem \ref{thm:meta} is
\emph{not}  true:  not
every differential graded commutative algebra possesses a divided
power structure on its homology, so these algebras cannot be
commutative monoids in $(\chain^R, \ttensor, N(\uR),
\ttau)$. For instance the polynomial ring
$\Z[x]$ over the integers with $x$ of degree two and trivial
differential provides an example. 

\begin{defn} \label{def:tcom}
We denote by $\tcom$ the category whose objects are commutative
monoids in $(\chain^R, \ttensor, N(\uR), \ttau)$ and whose morphisms are
multiplicative chain maps, \ie, if $C_*,D_*$ are objects of $\tcom$
with unit maps $j_{C_*}\colon N(\uR) \ra C_*$ and $j_{D_*}\colon N(\uR) \ra D_*$,
then a morphism is a chain map $f\colon C_* \ra D_*$ with $f
\circ j_{C_*} = j_{D_*}$ and such that the
diagram
$$\xymatrix{
{C_* \ttensor C_*}  \ar[d]_{\tilde{\mu}_{C_*}} \ar[r]^{f \ttensor f} & {D_*
    \ttensor D_*}  \ar[d]^{\tilde{\mu}_{D_*}} \\
{C_*} \ar[r]^{f} & {D_*}
}$$
commutes. 
\end{defn}
If we start with a $C_* \in \tcom$, then in particular, $C_*$ is a differential
graded algebra, \ie, the differential $d$ on the underlying chain complex
$C_*$ is compatible with the product structure: it satisfies the
Leibniz rule
$$  d(ab) = d(a)b + (-1)^{|a|}a d(b), \quad \text{for all} \quad a, b
\in C_*.$$
If there are divided power structures on the underlying graded
commutative algebra $C_*$, then we want these to be compatible with
the differential. 
\begin{defn} \label{def:divchain}
1) A commutative differential algebra $C_*$ with divided power operations
is called a \emph{divided power
chain algebra}, if the differential $d$ of $C_*$ satisfies
\begin{enumerate}
\item 
$$ d(\gamma_i(c)) = d(c)\cdot \gamma_{i-1}(c) \quad \text{for all} \quad c 
\in C_*$$
\item
If $c$ is a boundary, then $\gamma_i(c)$ is a boundary for all $i \geq
1$. 
\end{enumerate}

2) A morphism of commutative differential algebras $f\colon C_* \ra
D_*$ is a \emph{morphism of divided power chain algebras}, if $f$
satisfies $f(\gamma_i(c)) = \gamma_i(f(c))$ for all $c \in C_*$, $i
\geq 0$.  
\end{defn}
The first condition in 1) ensures that divided powers respect cycles and
together with the second condition this guarantees that the homology
of $C_*$ inherits a divided power structure from $C_*$. 

A reformulation of the criterium for a divided power chain algebra is
used in \cite[definition 1.3]{AH}: they demand that every element of
positive degree is in the image of a morphism of differential graded
commutative algebras with divided power structure $f \colon D_* \ra
C_*$ such that $D_*$ satisfies condition (a) of Definition
\ref{def:divchain} and has trivial homology in positive degrees. 

% Sometimes the following auxiliary result might be handy. 
% \begin{prop}
% If $C_*$ is a chain complex consisting of torsion-free $R$-modules, 
% then the equation 
% $$ d(\gamma_2(c)) = c \cdot d(c)$$
% implies that 
% $$ d(\gamma_n(c)) = \gamma_{n-1}(c) \cdot d(c)$$
% for all $n \geq 1$. 
% \end{prop}
% \begin{proof}
% We do induction on $n$. As we know the result for $n=2$ (and $n=1$),
% the result follows because
% \begin{align*} 
% n d(\gamma_n(c)) & = d(\gamma_{n-1}(c) \cdot \gamma_1(c)) \\
%  & =  d(\gamma_{n-1}(c))\cdot c + \gamma_{n-1}(c) \cdot d(c) =
% \gamma_{n-2}(c) \cdot d(c) \cdot c + \gamma_{n-1}(c) \cdot d(c) \\
% & = (n-1)\gamma_{n-1}(c) \cdot d(c) +  \gamma_{n-1}(c) \cdot d(c) \\
% & = n \gamma_{n-1}(c) \cdot d(c).
% \end{align*}
% \end{proof}

For a commutative monoid with respect to $\ttensor$, $C_*$, the $i$-th
power of an element $c \in N\Gamma(C_*)$ is given via the following
composition 
$$ 
\xymatrix{
{c \in } \ar@{|->}[dd] &  {N\Gamma(C_*)_n} \ar[r]^(0.4){c \mapsto 
  c^{\otimes i}} & {N\Gamma(C_*)_n^{\otimes i}  
\subset (N\Gamma(C_*)^{\otimes i})_{ni}}
\ar[d]^{\sh} \\ 
{} & {} & {N(\Gamma(C_*) \hot \ldots \hot \Gamma(C_*))_{ni} = C_* \ttensor
  \ldots \ttensor C_*} \ar[dl]_(0.6){N(\hat{\mu})} \ar[d]^{\tilde{\mu}}  \\
{c^i \in } & {N\Gamma(C_*)_{ni}} \ar[r]^{\varphi}& {C_*.}
}
$$
We define a divided power structure on $N\Gamma(C_*)$ by using a
variant of the shuffle map as in Definition \ref{def:divsimpl} sending
$c^{\otimes i}$ to 
$$ \sum_{\sigma \in \mathrm{Sh}(n,\ldots,n)/\Sigma_i} 
\varepsilon(\sigma) s_{\sigma([ni] \backslash \underline{1})}(c)
\otimes \ldots \otimes s_{\sigma([ni] \backslash \underline{n})}(c)$$ 
and applying $N(\mu)$. 
\begin{thm} \label{thm:commttensor}
The composite $N \Gamma$ is a functor from the category $\tcom$ to the
category of divided power chain algebras.  
\end{thm}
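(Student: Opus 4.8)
The plan is to show that for each $C_* \in \tcom$, the chain complex $N\Gamma(C_*)$ together with the divided power operations $\gamma_i$ defined just before the theorem statement is a divided power chain algebra, and that this assignment is functorial. The starting observation is that by Theorem \ref{thm:meta}(d), $C_* \in \tcom$ is equivalent to $\Gamma(C_*)$ being a simplicial commutative $R$-algebra; via the identification $\varphi_{C_*}\colon N\Gamma(C_*)\cong C_*$ we may equally regard the underlying object as $N(\Gamma(C_*))$, the normalization of a simplicial commutative ring. This is the crucial reduction: everything I need to check about $\gamma_i$ lives on the normalization of a simplicial commutative algebra, where the operations $\gamma_i$ are literally the ones from Definition \ref{def:divsimpl}, except that I am now applying the shuffle-and-multiply formula \emph{on the chain level} rather than only on homology classes. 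So the real content is that the chain-level refinement of the homotopy-group divided powers interacts correctly with the differential.

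First I would verify that the graded-commutative-algebra axioms (b)--(h) of Definition \ref{def:gradeddivpowers} hold for the $\gamma_i$ as \emph{chain-level} operations, not just on homology. The algebraic identities are formal consequences of the combinatorics of shuffles modulo the block action of $\Sigma_i$ together with commutativity of $\tilde\mu$; since these were already established on homotopy groups in the Proposition of Section 4 and the verification there is essentially a manipulation of the shuffle formula (and the same formula is used verbatim here), I would cite \cite[\S 2.2]{F} and indicate that the arguments go through unchanged at the level of $N\Gamma(C_*)$ because the multiplication $\hat\mu$ on $\Gamma(C_*)$ is strictly, not just homotopy, commutative. The only genuinely new thing over Section 4 is that there I took homology classes of cycles, whereas now I must allow arbitrary chains.

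The heart of the proof, and the step I expect to be the main obstacle, is verifying the two conditions of Definition \ref{def:divchain}(1): the Leibniz-type rule $d(\gamma_i(c)) = d(c)\cdot\gamma_{i-1}(c)$ and the fact that boundaries are sent to boundaries. For this I would compute the differential $d_0$ on $N(\Gamma(C_*)\hot\cdots\hot\Gamma(C_*))$ applied to the shuffle expression $\sum_\sigma \varepsilon(\sigma)\, s_{\sigma([ni]\setminus\underline 1)}(c)\otimes\cdots\otimes s_{\sigma([ni]\setminus\underline n)}(c)$ and track how $d_0 = d_0\otimes\cdots\otimes d_0$ interacts with the degeneracies $s_{\sigma(k)}$ via the simplicial identities $d_j s_k = s_{k-1} d_j$ etc. The expectation is that almost all terms in $d_0$ of the shuffle sum are degenerate (hence vanish in the normalized complex), and the surviving terms reorganize — after applying $\hat\mu$ — into exactly $d(c)\cdot\gamma_{i-1}(c)$. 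This is the ``tedious calculation'' alluded to when $P_i$ was shown to be a chain map, now carried out one congruence level deeper; the bookkeeping of which face hits which block, and the cancellation governed by the signs $\varepsilon(\sigma)$, is where all the difficulty sits. Condition (2), that $\gamma_i$ of a boundary is a boundary, I would then deduce from condition (a) together with the additivity formula (f): writing a boundary as $c = d(e)$ and expanding $\gamma_i$ via the behaviour already established should exhibit $\gamma_i(c)$ as a boundary directly, much as in the simplicial setting where degenerate/boundary elements are preserved by the simplicial structure maps.

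Finally, functoriality is the easy part. A morphism $f\colon C_*\to D_*$ in $\tcom$ is by Definition \ref{def:tcom} a multiplicative chain map compatible with units, and by Theorem \ref{thm:gammac} (applied with $F=\Gamma$) it induces a map $\Gamma(f)$ of simplicial commutative $R$-algebras. Since $\Gamma(f)$ commutes with the simplicial structure maps, it commutes with the degeneracies $s_i$ and with multiplication, so it commutes with the whole shuffle-and-multiply formula defining $\gamma_i$; hence $N\Gamma(f)$ satisfies $N\Gamma(f)(\gamma_i(c)) = \gamma_i(N\Gamma(f)(c))$, which is exactly the condition in Definition \ref{def:divchain}(2) for a morphism of divided power chain algebras. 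Thus $N\Gamma$ lands in the category of divided power chain algebras and is functorial, completing the proof.
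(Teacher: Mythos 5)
Your setup and overall plan match the paper's: identify $N\Gamma(C_*)$ with the normalization of the simplicial commutative algebra $\Gamma(C_*)$, define $\gamma_i$ by the chain-level shuffle-and-multiply formula, check the two conditions of Definition \ref{def:divchain}, and get functoriality from Theorem \ref{thm:gammac} plus naturality of the degeneracies (your functoriality paragraph is essentially the paper's argument and is fine). But the two verifications that constitute the actual content of the theorem are not carried out, and in one case the route you propose would not work.

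For the Leibniz rule you only record an ``expectation'' that the computation works out, and the mechanism you guess at is not the right one: it is not that most terms of $d_0$ applied to the shuffle sum become degenerate and vanish. What actually happens is that for each shuffle $\sigma$ exactly one block $\underline{j}$ fails to have $0$ in $\sigma([ni-1]\setminus\underline{j})$; in the other $i-1$ tensor factors the identity $d_0 s_0=\id$ (together with $d_0 s_k = s_{k-1}d_0$) turns $d_0$ of that factor into degeneracies applied to $c$, while in the remaining factor $d_0$ reaches $c$ itself and produces degeneracies of $d_0(c)$. One then needs the combinatorial fact that composing $\Sh(n,\ldots,n)_{i-1}\otimes\id$ with $\Sh(n(i-1),n)$ gives a bijection onto $\Sh(n,\ldots,n)_i$ (compatibly with the $\Sigma_{i-1}$- and $\Sigma_i$-quotients, checked in the paper by counting), which is what reassembles the sum into $\gamma_{i-1}(c)\cdot d(c)$. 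None of this is in your proposal. For condition (b) your suggested deduction from condition (a) plus the additivity axiom is not valid: applying (a) to $e$ with $c=d(e)$ only shows that $c\cdot\gamma_{i-1}(e)$ is a boundary, not that $\gamma_i(c)$ is; indeed (b) is an independent axiom precisely because it does not follow from (a). The correct argument is direct: write $c=d_0(b)$, use $s_{j-1}d_0=d_0 s_j$ to commute every $d_0$ past the degeneracies to the front of each tensor factor, and observe that $N(\mu)\circ N(d_0\otimes\cdots\otimes d_0)=d_0\circ N(\mu)$ exhibits $\gamma_i(c)$ as $d_0$ of an explicit element. As it stands, the proposal is a correct outline with the central computations missing and one incorrectly planned step.
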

\begin{proof}
Let $C_*$ be a commutative monoid in $(\chain^R, \ttensor, N(\uR),
\ttau)$.  Let us first prove that 
$$ d(\gamma_i(c)) = \gamma_{i-1}(c) \cdot d(c)$$
for all $c \in C_*$ in positive degrees. 
If we apply the boundary $d=d_0$ to 
\begin{equation}\label{eq:sum} 
\gamma_i(c) = N(\mu)(\sum_{\sigma \in \Sh(n,\ldots,n)/\Sigma_i}
\varepsilon(\sigma)  s_{\sigma([ni-1] \backslash \underline{1})}(c)
\otimes \ldots \otimes s_{\sigma([ni-1] \backslash
\underline{n})}(c))
\end{equation}
then we can use that $d_0$ is a morphism in the simplicial category to
obtain 
$$ d_0 \circ N(\mu) = N(\mu) \circ d_0 \otimes \ldots \otimes d_0.$$
Only one of the sets $\sigma([ni-1] \backslash \underline{j})$ does
not contain zero. Therefore the simplicial identities  
$d_0 \circ s_i = s_{i-1} \circ d_0$ for
$i>0$ and $d_0 \circ s_0 = \id$ ensure, that in the sum \eqref{eq:sum}
there are $i-1$ tensor factors containing  just degeneracies applied
to $c$ and only one term containing degeneracies applied to $d_0(c)$. 

A shuffle permutation in $\Sh(\underbrace{n,\ldots,n}_{i-1})$
tensorized with the identity map followed by a shuffle in
$\Sh(n(i-1),n)$ gives a shuffle in $\Sh(\underbrace{n,\ldots,n}_{i})$
and every shuffle  in $\Sh(\underbrace{n,\ldots,n}_{i})$ is
decomposable in the above way. There are 
$$ \prod_{j=2}^n \frac{\binom{nj}{n}}{i!}$$
elements in $\Sh(\underbrace{n,\ldots,n}_{i})/\Sigma_i$ and 
$$ \prod_{j=2}^{n-1} \frac{\binom{nj}{n}}{(i-1)!} \cdot \dbinom{ni-1}{n-1}$$
elements in the product of $\Sh(\underbrace{n,\ldots,n}_{i-1})/\Sigma_{i-1}$ and
$\Sh(n(i-1),n)$. As these numbers are equal, we obtain that the two
sets are in bijection and $d_0(\gamma_i(c))$ can be expressed as the
product of $\gamma_{i-1}(c)$ and $d_0(c)$. 

\medskip

The boundary criterium for the divided power structure can be seen as
follows: if $c$ is of
the form $d_0(b)$ for some $b \in N\Gamma(C_*)_{n+1}$, then $\gamma_i(c)$
is 
$$ N(\mu)(\sum_{\sigma \in \Sh(n,\ldots,n)/\Sigma_i}
\varepsilon(\sigma)  s_{\sigma([ni-1] \backslash \underline{1})}(d_0(b))
\otimes \ldots \otimes s_{\sigma([ni-1] \backslash
\underline{n})}(d_0(b))).$$ 
The simplicial identity $s_{j-1}d_0 = d_0s_j$
for all $j>0$ allows us to move the $d_0$-terms in front by increasing the
indices of the degeneracy maps.  Therefore $\gamma_i(d_0(b))$ is equal to
an expression of the form $ N(\mu)N(d_0 \otimes \ldots \otimes d_0)(x)$ for some
suitable $x$. As $N(\mu)N(d_0 \otimes \ldots \otimes d_0)$ is equal to
$d_0 \circ N(\mu)$ we obtain the desired result.  

\medskip
For $f\colon C_* \ra D_*$ a morphism of commutative
monoids in $(\chain^R, \ttensor, N(\uR), \ttau)$ we have to show
that $N\Gamma(f)$ is a morphism of divided power chain algebras, \ie,
that it is a multiplicative chain map that preserves units and divided
powers. As $f$ is a chain map, $\Gamma(f)$ is a map of simplicial
$R$-modules and $N\Gamma(f)$ is a chain map. If $j_{C_*}$ and
$j_{D_*}$ are the units for $C_*$ and $D_*$, we have $f \circ j_{C_*}
= j_{D_*}$ and this implies 
$$ N\Gamma(f) \circ N\Gamma(j_{C_*}) \circ \varphi^{-1}_{N(\uR)} =
N\Gamma(j_{D_*}) \circ \varphi^{-1}_{N(\uR)}$$
and thus the unit condition holds.

In order to establish
that $N\Gamma(f)$ is multiplicative we have to show that the back face
in the diagram
$$ \xymatrix{
{N\Gamma(C_*) \otimes N\Gamma(C_*)} \ar[rrrr]^(0.4){\mu} \ar[drr]^{\sh} 
\ar[ddd]_{N\Gamma(f) \otimes N\Gamma(f)}& {} & {} & {} &
{N\Gamma(C_*)} \ar[ddd]_{N\Gamma(f)}\\
{} & {} & {N(\Gamma(C_*) \hot \Gamma(C_*))} \ar[rru]^{
  \varphi^{-1}
  \circ \tilde{\mu} = N(\hat{\mu}) \phantom{blab}} 
\ar[ddd]_(0.35){N(\Gamma(f) \hot \Gamma(f))} & {} & {} \\
{} & {} & {} & {} & {} \\
{N\Gamma(D_*) \otimes N\Gamma(D_*)} \ar'[rr][rrrr]^(-0.3){\mu} \ar[drr]^{\sh}& {}
& {} &  {} & {N\Gamma(D_*)} \\
{} & {} & {N(\Gamma(D_*) \hot \Gamma(D_*))} \ar[rru]^{
  \varphi^{-1}
  \circ \tilde{\mu} = N(\hat{\mu}) \phantom{blab}} & {} & {} 
}$$
commutes. The top and bottom triangle commute by definiton, the left
front square commutes because the shuffle map is natural and the right
front square commutes because we know from Theorem \ref{thm:gammac} that
$\Gamma(f)$ is multiplicative. 

The fact that $f$ preserves divided powers can be seen directly: using
naturality of $\varphi$ and the multiplicativity of $f$ with respect
to $\ttensor$, the only thing we have to verify is the compatibility
of $f$ with the variant of the shuffle map. But this map is a sum of
tensors of degeneracy maps and as $\Gamma(f)$ respects the simplicial
structure, the claim follows. 

\end{proof} 

\begin{rem}
We can transfer the model structure on
simplicial commutative $R$-algebras as in \cite[II, Theorem 4]{Quillen} to
the  category $\tcom$ of commutative monoids in $(\chain^R, \ttensor, N(\uR),
\ttau)$ by declaring that 
a  map $f\colon C_* \ra D_*$ is a weak
equivalence, fibration resp.~cofibration in $\tcom$ if and only if
$\Gamma(f)$ is a weak
equivalence, fibration resp.~cofibration in the model structure on
commutative simplicial $R$-algebras.  

% If $C_*$ is a chain complex whose homology $H_*(C_*)$ is a divided
% power algebra, can we use this model structure to approximate $C_*$ by
% a  commutative monoid with respect to $\ttensor$?  
\end{rem}

\section{Bar constructions and Hochschild complex} 

One well-known example of a divided power chain algebra is the
normalization of a bar construction of a commutative $R$-algebra (see for
instance \cite[Expos\'e no 7]{C} and \cite[\S 3]{BK}).

Let $A$ be a commutative $R$-algebra. The bar construction of $A$ is the
simplicial commutative $R$-algebra, $\B_\bullet(A)$, with 
$$\B_n(A) = A^{\otimes (n+2)}. $$
The simplicial structure maps are given by inserting the
multiplicative unit $1 \in R$ for
degeneracies and by multiplication for face maps. As $A$ is
commutative, we can multiply componentwise
$$ \B_n(A) \otimes \B_n(A) \ra \B_n(A), \qquad (a_0 \otimes \ldots
\otimes a_{n+1}) \otimes (a'_0 \otimes \ldots
\otimes a'_{n+1}) \mapsto a_0a'_0 \otimes \ldots
\otimes a_{n+1}a'_{n+1}.$$

From Theorems \ref{thm:gammac} and \ref{thm:commttensor} it follows
that the normalization $B_*(A) := N(\B_\bullet(A))$ is a divided power
chain algebra. 
 
The Hochschild complex of the  
commutative $R$-algebra $A$ ist  defined as 
$$ C_*(A) = A \otimes_{A \otimes A} B_*(A)$$
where the $A$-bimodule structure on $B_n(A)$ is induced by
$$ (a \otimes \tilde{a}) (a_0 \otimes \ldots
\otimes a_{n+1}) := aa_0 \otimes \ldots
\otimes a_{n+1}\tilde{a}.$$

If $A$ is flat over $R$, the homology of this complex is $\mathrm{Tor}_*^{A
  \otimes A}(A,A)$.  
As $B_*(A)$ is acyclic and surjects onto $C_*(A)$, the Hochschild
complex inherits a structure of a divided power chain algebra from
$B_*(A)$. Cartan showed \cite[Expos\'e 7]{C}, that the bar
construction of strict differential graded commutative algebras has a
divided power structure. Condition (b) of Definition
\ref{def:divchain} is in general satisfied on the iterated bar
construction $\B_*^{(n)}, n \geq 2$ \cite[p. 7, Expos\'e 7]{C}, so
that each $\B_*^{(n)}, n \geq 2$ is a divided power chain algebra.


\begin{thebibliography}{90999999}

\bibitem[A76]{A}
Michel Andr\'e, \emph{Puissances divis\'ees des alg\`ebres simpliciales en  
caract\'eristique deux et s\'eries de Poincar\'e de certains anneaux
locaux}, Manuscripta Math. \textbf{18} (1976), no. 1, 83--108.

\bibitem[AH86]{AH}
Luchezar Avramov, Stephen Halperin, 
\emph{Through the looking glass: a
dictionary between rational homotopy theory and local algebra},  
Algebra, algebraic topology and their interactions (Stockholm, 1983), 
Lecture Notes in Math. \textbf{1183}, Springer, Berlin (1986), 1--27.

\bibitem[Be74]{B}
Pierre Berthelot, 
\emph{Cohomologie cristalline des sch\'emas de caract\'eristique
  $p>0$}, Lecture Notes in Mathematics {\bf 407}, Springer-Verlag,  
Berlin-New York (1974) 604 pp.


\bibitem[Bo67]{Bo}
Aldridge K. Bousfield, 
\emph{Operations on derived functors for non-additive functors},
unpublished notes, Brandeis University (1967) 69 pp. 


\bibitem[BK94]{BK}
Siegfried Br\"uderle, Ernst Kunz, \emph{Divided powers and Hochschild
homology of complete intersections}, with an appendix by Reinhold
H\"ubl, Math. Ann. \textbf{299} (1994), no. 1, 57--76.

\bibitem[C54]{C}
Henri Cartan, 
S\'eminaire Henri Cartan, 1954-55, Alg\`ebre d'Eilenberg-MacLane et
homotopie, available at \url{http://www.numdam.org/}


\bibitem[Do58]{D1} 
Albrecht Dold, {\em Homology of symmetric products and other
functors of complexes}, Ann. of Math. (2) \textbf{68} (1958),  54--80.

\bibitem[D80]{D} 
William G. Dwyer, \emph{Homotopy operations for simplicial commutative
algebras}, Trans. Amer. Math. Soc.  \textbf{260} (1980), no. 2, 421--435.

\bibitem[EM54]{EM}
Samuel Eilenberg, Saunders Mac Lane, \emph{On the groups $H(\Pi,n)$, II,
Methods of computation},  Ann. of Math. (2)  \textbf{60}  (1954), 49--139.

\bibitem[E95]{E}
David Eisenbud, 
\emph{Commutative algebra. With a view toward algebraic
geometry}, 
Graduate Texts in Mathematics \textbf{150}, Springer-Verlag, New
York  (1995), xvi+785 pp. 

% \bibitem[EKMM97]{EKMM}
% Anthony D. Elmendorf, Igor Kriz, Michael A. Mandell, J. Peter  May, 
% \emph{Rings, modules, and algebras in stable homotopy theory},  
% With an appendix by M. Cole,  Mathematical Surveys and Monographs {\bf
%   47},  American Mathematical Society, Providence, RI (1997) xii+249 pp. 

\bibitem[F00]{F}
Benoit Fresse, \emph{On the homotopy of simplicial algebras over an
operad},  Trans. Amer. Math. Soc.  \textbf{352} (2000),  no. 9, 4113--4141.

\bibitem[G90]{G}
Paul Goerss, 
\emph{On the Andr\'e-Quillen cohomology of commutative
  $F_2$-algebras}, 
Ast\'erisque  {\bf 186} (1990), 169 pp.


\bibitem[GL69]{GL}
Tor H. Gulliksen, Gerson Levin, 
\emph{Homology of local rings}, 
Queen's Paper in Pure and Applied Mathematics {\bf 20},  
Queen's University, Kingston, Ont. (1969),  x+192 pp. 

%\bibitem[HSS95]{HSS}
%Mark Hovey, Brooke Shipley, Jeff Smith, 
%\emph{Symmetric spectra}, J. Amer. Math. Soc. {\bf 13} (2000), 
%no. 1, 149--208. 

\bibitem[ML95]{ML}
Saunders Mac Lane, \emph{Homology}, 
Reprint of the 1975 edition, Classics in Mathematics,  
Springer-Verlag, Berlin, (1995) x+422 pp.

% \bibitem[MSV97]{MSV} 
% Jim McClure, Roland Schwänzl, Rainer Vogt, 
% \emph{$THH(R)\cong R\otimes S^1$ for $E_\infty$ ring spectra}, 
% J. Pure Appl. Algebra  {\bf 121}  (1997),  no. 2, 137--159.


\bibitem[Q$\infty$]{Q}
Nguyen Tien Quang, 
\emph{On GR-Functors between GR-Categories}, preprint 
arXiv:0708.1348. 

\bibitem[Qui67]{Quillen}
Daniel G. Quillen, 
\emph{Homotopical algebra}, 
Lecture Notes in Mathematics \textbf{43} Springer-Verlag, Berlin-New
York (1967)  iv+156 pp.

\bibitem[R03]{R}
Birgit Richter, \emph{Symmetry properties of the Dold-Kan
  correspondence}, Math. Proc. Cambridge Philos. Soc.  \textbf{134}
(2003), no. 1, 95--102. 

\bibitem[R06]{R2}
Birgit Richter, \emph{Homotopy algebras and the inverse of the
  normalization  functor}, Journal of Pure and Applied Algebra
\textbf{206} (3) (2006), 277--321.

%\bibitem[Ro63]{Ro2}
%Norbert Roby, 
%\emph{Lois polynomes et lois formelles en th\'eorie des modules}, 
%Ann. Sci. \'Ecole Norm. Sup. (3)  \textbf{80}  (1963),  213--348. 

\bibitem[Ro68]{Ro}
Norbert Roby, 
\emph{Construction de certaines alg\'ebres \`a  puissances
  divis\'ees}, Bull. Soc. Math. France  \textbf{96} (1968), 97--113.

\bibitem[Sch01]{Sch}
Peter Schauenburg, 
\emph{Turning monoidal categories into strict ones}  
New York J. Math.  \textbf{7}  (2001), 257--265. 

\bibitem[SchSh03]{SchSh}
Stefan Schwede, Brooke Shipley, 
\emph{Equivalences of monoidal model categories},
Algebr. Geom. Topol. \textbf{3} (2003), 287--334.  

\bibitem[St93]{St}
Christopher R. Stover, 
\emph{The equivalence of certain categories of twisted Lie and Hopf
  algebras over a commutative ring}, 
J. Pure Appl. Algebra  \textbf{86}  (1993),  no. 3, 289--326.


%\bibitem[Sm01]{Sm}
%V. A. Smirnov, 
%\emph{Simplicial and operad methods in algebraic topology}, 
%Translated from the Russian manuscript by G. L. Rybnikov,  
%Translations of Mathematical Monographs  {\bf 198}, 
%American Mathematical Society, Providence, RI, 2001. x+235 pp.

\bibitem[T99]{T}
James M. Turner, James M. Relations in the homotopy of simplicial abelian Hopf
algebras.  J. Pure Appl. Algebra  134  (1999),  no. 2, 163--206. 

\bibitem[W94]{W}
Charles A. Weibel, 
\emph{An introduction to homological algebra}, 
Cambridge Studies in Advanced Mathematics \textbf{38}, 
Cambridge University Press, Cambridge (1994) xiv+450 pp.

% \bibitem[Wh01]{Wh}
% Sarah Whitehouse, 
% \emph{Symmetric group actions on tensor products of Hopf algebroids},  
% Comm. Algebra  {\bf 29}  (2001),  no. 8, 3351--3363. 
\end{thebibliography}
\end{document}